\documentclass[a4paper, 11pt]{amsart} 
\usepackage{mathptmx, amscd, eulervm, enumerate, colonequals, mathbbol, mathrsfs,amssymb}
\usepackage[dvipsnames]{xcolor}
\usepackage[pagebackref, colorlinks=true, citecolor=violet, linkcolor=blue, urlcolor=blue]{hyperref}

\usepackage[onehalfspacing]{setspace}
\usepackage{aliascnt}
\usepackage[inner=2.4cm,outer=2.4cm, bottom=3.2cm]{geometry}

\def\kk{\mathbb{k}}
\def\AA{\mathbb{A}}
\def\CC{\mathbb{C}}
\def\PP{\mathbb{P}}
\def\ZZ{\mathbb{Z}}
\def\QQ{\mathbb{Q}}

\def\calF{\mathcal{F}}
\def\calG{\mathcal{G}}
\def\calO{\mathcal{O}}
\def\calP{\mathcal{P}}
\def\calU{\mathcal{U}}

\def\ba{\mathbf{a}}

\def\bp{\mathbf{p}}

\def\bP{\mathbf{P}}

\def\frakA{\mathfrak{A}}
\def\frakM{\mathfrak{M}}

\def\frakn{\mathfrak{n}}
\def\frakp{\mathfrak{p}}

\def\scrE{\mathscr{E}}
\def\scrI{\mathscr{I}}
\def\scrL{\mathscr{L}}
\def\scrN{\mathscr{N}}
\def\Rees{\mathscr{R}}

\DeclareFontFamily{OT1}{pzc}{}
\DeclareFontShape{OT1}{pzc}{m}{it}{<-> s * [1.100] pzcmi7t}{}
\DeclareMathAlphabet{\mathchanc}{OT1}{pzc}{m}{it}
\DeclareMathOperator{\sHom}{\mathchanc{Hom}}
\DeclareMathOperator{\sProj}{\mathchanc{Proj}}
\DeclareMathOperator{\sSpec}{\mathchanc{Spec}}

\def\codim{\operatorname{codim}}
\def\rank{\operatorname{rank}}
\def\Ext{\operatorname{Ext}}
\def\gr{\operatorname{gr}}
\def\Frac{\operatorname{Frac}}
\def\HH{\operatorname{H}}
\def\Ass{\operatorname{Ass}}
\def\PGL{\operatorname{PGL}}
\def\Pic{\operatorname{Pic}}
\def\Proj{\operatorname{Proj}}
\def\Sing{\operatorname{Sing}}
\def\Spec{\operatorname{Spec}}
\def\maxSpec{\operatorname{MaxSpec}}
\def\Supp{\operatorname{Supp}}
\def\Sym{\operatorname{Sym}}
\def\Res{\operatorname{Res}}

\def\to{\longrightarrow}
\def\mapsto{\longmapsto}
\def\into{\lhook\joinrel\longrightarrow}

\newtheorem{theorem}{Theorem}[section]
\newtheorem{headthm}{Theorem}

\newaliascnt{corollary}{theorem}
\newtheorem{corollary}[corollary]{Corollary}
\aliascntresetthe{corollary}

\newaliascnt{lemma}{theorem}
\newtheorem{lemma}[lemma]{Lemma}
\aliascntresetthe{lemma}

\newaliascnt{proposition}{theorem}
\newtheorem{proposition}[proposition]{Proposition}
\aliascntresetthe{proposition}

\theoremstyle{definition}
\newaliascnt{definition}{theorem}
\newtheorem{definition}[definition]{Definition}
\aliascntresetthe{definition}

\newaliascnt{example}{theorem}

\aliascntresetthe{example}

\newaliascnt{remark}{theorem}
\newtheorem{remark}[remark]{Remark}
\aliascntresetthe{remark}

\newaliascnt{question}{theorem}
\newtheorem{question}[question]{Question}
\aliascntresetthe{question}

\newaliascnt{conjecture}{theorem}

\aliascntresetthe{conjecture}

\newaliascnt{hypothesis}{theorem}
\newtheorem{hypothesis}[hypothesis]{Hypothesis}
\aliascntresetthe{hypothesis}

\numberwithin{equation}{theorem}
\def\equationautorefname~#1\null{(#1)\null}

\title{Generic flatness of the cohomology of thickenings}

\author{Edoardo Ballico}
\address{Dipartimento di Matematica, Universit\`a di Trento, Via Sommarive, 14 - 38123 Povo (Trento), Italy}
\email{edoardo.ballico@unitn.it}
\urladdr{https://orcid.org/0000-0002-1432-7413}

\author{Yairon Cid-Ruiz}
\address{Department of Mathematics, North Carolina State University, SAS Hall 4214, Raleigh, NC~27695,~USA}
\email{ycidrui@ncsu.edu}

\author{Anurag K. Singh}
\address{Department of Mathematics, University of Utah, 155 South 1400 East, Salt Lake City, UT~84112, USA}
\email{singh@math.utah.edu}

\date{\today}

\thanks{Y.C.-R. was supported by NSF grant DMS~2502321, and A.K.S. by NSF grants DMS~2101671 and DMS~2349623; E.B. is a member of GNSAGA of INdAM (Italy)}
\subjclass[2020]{Primary 13D45; Secondary 13A02, 13A30, 14F17.}

\begin{document}

\begin{abstract}
We prove a generic flatness result for the cohomology of thickenings of a projective scheme that is smooth over a Noetherian domain containing a field of characteristic zero. Our study is motivated, in part, by a classical question in algebraic geometry: Given a set of $m$ distinct points in projective space over a field, and $t$ a positive integer, determine the least degree of a hypersurface that passes through each point with multiplicity at least $t$. Related to this, it remains unresolved whether there exists a dense open set of $m$-tuples of points for which this least degree is constant for each $t\ge 1$. Investigating this connection in the case of nine points in projective plane, we construct a local cohomology module that is not generically free; moreover, we show that it has infinitely many associated prime ideals.
\end{abstract}

\maketitle

\section{Introduction}

Let $X \subset \PP_\kk^n$ be a projective scheme over a field $\kk$. For each integer $t \ge 1$, the \emph{$t$-th thickening} of $X$ is the closed subscheme $X_t \colonequals V\left(\scrI_X^t\right)$ in $\PP_\kk^n$, where $\scrI_X \subset \calO_{\PP_\kk^n}$ is the ideal sheaf of $X$. The study of the asymptotic behavior of the cohomology of these thickenings has a long history in algebraic geometry and commutative algebra, with connections to deformation theory, Rees algebras, and local cohomology. More precisely, given a coherent sheaf $\calF$ on $\PP_\kk^n$ that is flat along $X$, one is interested in asymptotically studying the induced projective system
\[
\CD
@>>> \HH^i\big(X_{t+1},\, \calO_{X_{t+1}}\otimes_{\calO_{\PP^n_\kk}}\calF\big)
@>>> \HH^i\big(X_t,\, \calO_{X_t}\otimes_{\calO_{\PP^n_\kk}}\calF\big)
@>>> \HH^i\big(X_{t-1},\, \calO_{X_{t-1}}\otimes_{\calO_{\PP^n_\kk}}\calF\big) 
@>>>
\endCD
\]
for each cohomological index $i \ge 0$. We summarize some work in this direction:

(a) A foundational result of Hartshorne \cite{HARTSHORNE_AMPLE} shows that if $X \subset \PP_\kk^n$ is smooth, where $\kk$ has characteristic zero, then the cohomology of the symmetric powers of the conormal bundle exhibits strong vanishing properties; as a consequence, for each $i<\dim X$, the natural map
\begin{equation}
\label{equation_map_thickenings}
\CD
\HH^i\big(X_{t+1},\, \calO_{X_{t+1}}\otimes_{\calO_{\PP^n_\kk}}\calF\big)
@>>> \HH^i\big(X_t,\, \calO_{X_t}\otimes_{\calO_{\PP^n_\kk}}\calF\big)
\endCD
\end{equation}
is an isomorphism for all $t\gg0$; see \cite[Theorem~8.1~(b)]{HARTSHORNE_AMPLE} and \cite[Theorem~2.8]{BBLSZ2}.

(b) This is extended in \cite[Theorem~1.1]{BBLSZ2} as follows: if $X\subset\PP_\kk^n$ is locally a complete intersection, where $\kk$ is a field of characteristic zero, then, for each $i<\codim(\Sing X)$, the map~\autoref{equation_map_thickenings} is an isomorphism for all~$t\gg0$. In particular, if $X$ has at most isolated singular points, then~\autoref{equation_map_thickenings} is an isomorphism for all~$i<\dim X$ and $t\gg0$.

(c) A version of the Kodaira vanishing theorem for thickenings of locally complete intersection subvarieties of $\PP_\kk^n$, for $\kk$ of characteristic zero, may be found in \cite[Theorem~1.4]{BBLSZ2}; an asymptotic version of the same holds in positive prime characteristic, \cite[Theorem~1.2]{BBLSZ3}.

The present paper investigates a relative version of these phenomena over a Noetherian base ring; specifically, we study the generic flatness of the cohomology of thickenings of a closed subscheme in the following context: Let $A$ be a Noetherian domain, and $X\subset\PP^n_A$ a closed subscheme. Consider the $t$-th thickening $X_t\colonequals V(\scrI_X^t)$ of $X$ in $\PP_A^n$, where $\scrI_X\subset\calO_{\PP_A^n}$ is the ideal sheaf of $X$. For $\calF$ a coherent sheaf on~$\PP_A^n$ that is flat along $X$, a natural question then arises regarding the behavior of the cohomology groups 
\[
\HH^i\big(X_t,\, \calO_{X_t}\otimes_{\calO_{\PP_A^n}}\calF\big) 
\]
as $t$ varies: generic flatness guarantees that for a \emph{fixed} $t$, the cohomology is flat over a dense open subset of $\Spec A$; however, it is not clear whether there is a single dense open subset that suffices for \emph{all} $t \ge 1$. Our first main result provides an affirmative answer to this when $X$ is smooth over $\Spec A$:

\begin{headthm}
\label{thm_A}
Let $A$ be a Noetherian domain containing a field of characteristic zero. Let $X\subset\PP^n_A$ be a closed subscheme that is smooth over $\Spec A$, and $X_t$ be its $t$-th thickening, as defined above. Fix an integer~$j$, and set $\calF\colonequals\calO_{\PP^n_A}(j)$. Then there exists a nonzero element $a\in A$ such that 
\[
\HH^i\big(X_t,\, \calO_{X_t}\otimes_{\calO_{\PP^n_A}}\calF\big)\otimes_A A_a
\]
is a flat $A_a$-module for each integer $i\ge 0$, for all thickenings $t\ge 1$.
\end{headthm}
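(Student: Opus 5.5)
Write $\scrI=\scrI_X$. The plan is to reduce the statement to finitely many flatness questions by passing to the graded pieces $\scrI^t/\scrI^{t+1}$ and then using Rees-algebra finiteness.

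\textbf{Step 1: the associated graded pieces.} Since $X\to\Spec A$ is smooth and $\PP^n_A\to \Spec A$ is smooth, $X\subset \PP^n_A$ is a regular embedding. Hence the conormal sheaf $\scrN^\vee=\scrI/\scrI^2$ is locally free on $X$, and $\scrI^t/\scrI^{t+1}\cong \Sym^t(\scrN^\vee)$. For each $t$ there is the exact sequence
\[
0\to \Sym^t(\scrN^\vee)(j)\to \calO_{X_{t+1}}(j)\to \calO_{X_t}(j)\to 0.
\]
All sheaves here are flat over $A$, and $\Sym^t(\scrN^\vee)$ is flat over $A$ because $X$ is.

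\textbf{Step 2: uniform flatness of the graded pieces.} Form the relative Proj $\pi\colon P=\PP(\scrN^\vee)\to X$, with $\pi_*\calO_P(t)=\Sym^t\scrN^\vee$ and $R^q\pi_*\calO_P(t)=0$ for $q>0$ and $t\ge 0$. This gives
\[
\HH^i\big(X,\Sym^t(\scrN^\vee)(j)\big)=\HH^i\big(P,\calO_P(t)\otimes\pi^*\calO_X(j)\big).
\]
Consider the bigraded module
\[
M^i=\bigoplus_{t\ge 0}\HH^i\big(P,\calO_P(t)\otimes \pi^*\calO(j)\big)
\]
over the finitely generated $A$-algebra
\[
S=\bigoplus_t \HH^0\big(P,\calO_P(t)\otimes\pi^*\calO(m)\big)
\]
for suitable twists. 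I would rather use the standard fact that $\bigoplus_t \HH^i(P,\mathcal G(t))$ is a finitely generated module over a finitely generated graded $A$-algebra (the section ring of a relatively ample line bundle $\calO_P(1)\otimes\pi^*\calO(c)$, truncated to handle the twist). By generic freeness in the form of Hochster–Roberts / Grothendieck, a finitely generated module over a finitely generated $A$-algebra becomes free over $A_a$ for some single $a\neq 0$. This yields one $a$ such that $\HH^i(X,\Sym^t\scrN^\vee(j))_a$ is $A_a$-free for all $t$ and all $i$; only $i\le \dim$ matters, so there are finitely many $i$. Flatness of the module $\calG$ over $A$ is used here.

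\textbf{Step 3: the main obstacle, passing to the $X_t$.} Flatness of kernels and cokernels is not automatic from the long exact sequence, since connecting maps can destroy freeness. The trick I would use is base change. After inverting $a$, assume additionally, again by Step 2 applied uniformly, that all $\HH^i(X,\Sym^t\scrN^\vee(j))$ are free and that their formation commutes with base change to every point of $\Spec A_a$; this is the cohomology-and-base-change criterion, which holds once all $\HH^i$ are locally free. For each $t$ one then shows by induction, using the exact sequences and the fact that each $\calO_{X_t}(j)$ is $A$-flat, that $\HH^i(X_t,\calO_{X_t}(j))_a$ is flat for all $i$. The point is that a bounded complex of flat modules computing $R\Gamma(\calO_{X_t}(j))$ (a Čech complex) has all cohomology flat if and only if its formation commutes with base change. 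Base change for the $X_t$ follows from the five lemma applied to the base-changed long exact sequences, once the graded pieces commute with base change in all degrees.

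To run the induction cleanly, I would use the criterion that for a bounded-above complex $K$ of flat modules, all $H^i(K)$ are flat iff $H^i(K\otimes k(\frakp))$ has constant dimension-type behavior. A more robust approach is the cone argument: if $K'\to K\to K''$ is a triangle of perfect complexes whose cohomologies in all degrees are flat for $K'$ and $K''$, then $K$ need not have flat cohomology. This is exactly why Step 2 alone is insufficient. The fix is to include $\bigoplus_t\HH^i(X_t,\calO_{X_t}(j))$ in a finitely generated module over a Rees-type algebra. Concretely, take
\[
\bigoplus_t \HH^i\big(\PP^n_A,\scrI^t(j)\big),
\]
which is a module over the Rees algebra $\bigoplus_t \HH^0(\scrI^t(c))$, and use the sequences $0\to\scrI^t\to\calO\to\calO_{X_t}\to0$. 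Finite generation of $\bigoplus_t\HH^i(\scrI^t(j))$ follows by writing these groups as cohomology on the blowup $\mathrm{Bl}_X\PP^n_A$ of $\calO(-tE)\otimes(j)$, together with the vanishing of higher direct images for large $t$. Generic freeness then gives one $a$ for all $t$ and $i$, and $\HH^i(X_t)$ sits between the free modules $\HH^i(\PP^n,\calO(j))$ and $\HH^{i+1}(\scrI^t(j))$. The remaining issue is flatness of the kernels and images of the maps between them. I expect this to be the hard point, handled by applying generic freeness also to the finitely generated kernel and cokernel modules over the Rees algebra.
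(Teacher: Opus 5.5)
\textbf{Gap in Step~3.} You correctly isolate the real difficulty: the connecting maps must be controlled for all $t$ at once. The five-lemma remark does not do this, because the long exact sequence tensored with an $A$-algebra $B$ need not stay exact. The proposed Rees-algebra fix does not work either.

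For fixed $j$ there is no useful Rees-algebra structure on $\bigoplus_t\HH^i(\PP^n_A,\scrI^t(j))$. A section of $\scrI^s(cs)$ sends $\HH^i(\scrI^t(j))$ to $\HH^i(\scrI^{t+s}(j+cs))$, so the twist moves unless $c=0$. For $c=0$, the algebra $\bigoplus_t\HH^0(\PP^n_A,\scrI^t)$ is zero in positive degrees when $X\neq\varnothing$.

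The blowup argument (Serre finiteness on $\mathrm{Bl}_X\PP^n_A$) gives finite generation of $\bigoplus_t\HH^i(\PP^n_A,\scrI^t(j+ct))$ only when $\sigma^*\calO(c)(-E)$ is ample over $A$, i.e.\ for $c\gg0$. For $i>0$ those groups then vanish for $t\gg0$ and say nothing about a fixed twist.

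This is not a technicality. Nine general points in $\PP^2_A$ are \'etale over a dense open subset of $\Spec A$. Yet by \autoref{thm_9points} the Rees-module $\bigoplus_t\HH^1(\PP^2_A,\scrI^t(3t))$ is not generically flat, and $3H-E$ is nef but not ample on that blowup. So ``finitely generated over a Rees algebra, then generic freeness'' is not a viable mechanism, and your closing appeal to generic freeness of kernels and cokernels has nothing to rest on.

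\textbf{Step~2.} It has the same defect. $\calO_{\bP(\scrN^\vee)}(1)$ is not ample over $A$; for a line in $\PP^2_A$ it is $\calO(-1)$. Twisting by $\pi^*\calO(c)$ again makes the twist grow with $t$. The conclusion of Step~2 is true, but for a different reason, namely \autoref{thm_uniform_vanish}. It is $\scrN$, not $\scrN^\vee$, that is $f$-ample. Serre duality together with $(\Sym^t\scrN^\vee)^\vee\cong\Sym^t\scrN$ in characteristic zero then gives $\HH^i(X_\frakp,\Sym^t(\scrI_{X_\frakp}/\scrI_{X_\frakp}^2)(j))=0$ for all $i\neq d$, all $\frakp$, and all $t\ge t_0$.

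\textbf{The missing idea.} You need a module that is flat for independent reasons, into which the relevant Ext modules of all large thickenings inject, compatibly with base change. The paper uses $\HH^i_I(R)=\varinjlim_t\Ext^i_R(R/I^t,R)$, which is generically free by \autoref{thm_CRS}; this is a second use of characteristic zero.

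Set $k=-j-n-1$ and $c=n-d$. The uniform vanishing kills $\Ext^{i}_R(I^t/I^{t+1},R)_k$ for $i\neq c$ and $t\ge t_0$. Hence the maps $\Ext^i_R(R/I^t,R)_k\to\Ext^i_R(R/I^{t+1},R)_k$ are:
\begin{itemize}
\item isomorphisms for $i\neq c,c+1$;
\item injective for $i=c$;
\item surjective for $i=c+1$, and Noetherianity makes these isomorphisms too.
\end{itemize}
Exchange for local Ext carries all of this to $R\otimes_A B$ for every $A$-algebra $B$ (\autoref{prop_inject}).

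Therefore $\Ext^i_{R_V}(R_V/I^tR_V,R_V)_k\into\HH^i_I(R_V)_k$ for every discrete valuation ring $V$ over $A$. These Ext modules are then torsion-free, hence $V$-flat. Graded local duality (\autoref{prop_fib_full_mod}) and the rank comparison of \autoref{prop_valuative} give flatness of $\HH^i_\frakM(R/I^t)_j$, hence of $\HH^i(X_t,\calO_{X_t}(j))$, for all $t\ge t_0$. The finitely many $t<t_0$ are handled by ordinary generic freeness. Without an ingredient of this kind, your induction on $t$ cannot be made uniform.
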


The proof of this relies on understanding the asymptotic behavior of the cohomology of powers of the ideal sheaf. Towards this, we use the notion of relatively ample vector bundles; while the theory of ample vector bundles on schemes over a field is well-established following Hartshorne \cite{HARTSHORNE_AMPLE}, we require a relative version over an arbitrary Noetherian base scheme $S$. In \autoref{sect_ample_vect}, we develop the necessary theory of $f$-ample vector bundles and establish a fiberwise criterion for ampleness, where $f\colon X\subset\PP_S^n\to S$ is a projective morphism. Our primary example of an $f$-ample vector bundle is the normal bundle $\scrN_X$ associated with a closed immersion $X\into\PP^n_S$, assuming that $X$ is smooth over the base $S$. 

A key technical tool in our approach is a uniform vanishing result, \autoref{thm_uniform_vanish}, for the cohomology of the symmetric powers of the conormal bundle, that extends classical results of Hartshorne. In \autoref{sect_flatness}, we combine this vanishing result with a generic freeness result for local cohomology modules established in \cite{CRS}. More precisely, we exploit the following: 

(a) In \cite[Theorem~B]{CRS}, it was shown that the local cohomology modules $\HH_I^i(R)$ are generically free over the base ring $A$, where $I$ is an ideal of $R\colonequals A[x_0,\dots,x_n]$.

(b) Using a graded local duality argument, we obtain a stabilization result for the graded components of the modules $\Ext_R^i(R/I^t,\, R)$, associated with the thickenings of $V(I)$ in $\PP_A^n$. 

(c) We deduce the flatness of the cohomology of thickenings by exploiting the natural identification
\[
\lim\limits_{\substack{\to\\ t}} \Ext_R^i\big(R/I^t,\, R\big)\ =\ \HH_I^i(R),
\]
and the aforementioned stabilization result for Ext modules; see \autoref{prop_inject} and \autoref{prop_valuative}.

The generic freeness of local cohomology modules of certain Rees algebras connects to a classical question regarding points in projective space:

\begin{question}
\label{question_nagata}
Given a set $X$ of $m$ distinct points in $\PP_\kk^n$, what is the minimum degree $\alpha_t(X)$ of a hypersurface in~$\PP_\kk^n$ that passes through each point with multiplicity at least $t$?
\end{question}

The question gained significance following Nagata's work on Hilbert's 14-th problem~\cite{NAGATA_CONJ}; towards constructing his counterexample, Nagata proved that if $m\ge 16$ is a perfect square, then
\[
\alpha_t(X)\ >\ t\sqrt{m}.
\]
Substantial effort has since been put forth towards \autoref{question_nagata}, though it remains elusive and difficult; cf.~\cite{CHUDNOVSKY_CONJ, DEMAILLY, ESNAULT_VIEHWEG, BOCCI_HARBOURNE, DUMNICKI_P3, HARBOURNE_HUNEKE, GHM2013, Dumnicki2015, DUMNICKI_TUTAJ, FOULI_MANTERO_XIE, MSS2018, BISUI_CHUDNOVSKY}. By a celebrated theorem of Zariski and Nagata, see \cite{ZARISKI, NAGATA_LOCAL_RINGS}, \cite[Theorem~3.14]{EISEN_COMM}, when $\kk$ is perfect, one has
\[
\alpha_t(X)\ =\ \alpha\big(I_X^{(t)}\big)\quad \text{ for all } t\ge 1,
\]
where $I_X$ is the defining ideal of $X$, the ideal $I_X^{(t)}$ is its $t$-th symbolic power, and for $I$ a homogeneous ideal, $\alpha(I)$ denotes the least degree of a homogeneous element of $I$.

Given an $m$-tuple $\bp\colonequals(\bp_1,\dots,\bp_m)$ of distinct points from $\PP_\kk^n$, set $X_\bp\colonequals\{\bp_1,\dots,\bp_m\}$ in $\PP_\kk^n$. To the best of our knowledge the following is completely open:

\begin{question}
\label{question_constant}
Does there exist a dense open subset $\calU$ in $\left(\PP_\kk^n\right)^m$, such that the function 
\[
\bp\in\calU\ \mapsto\ \alpha_t\big(X_\bp\big)
\]
is constant for all $t\ge 1$?
\end{question}

The question above asks whether the invariants $\alpha_t(X)$ are all constant for a \emph{general set} of $m$ points in~$\PP_\kk^n$. Again, a generic flatness argument yields such a dense open subset for a \emph{fixed} $t$, but the challenge lies in finding a single dense open subset that works for \emph{all} $t \ge 1$. In \autoref{sect_points}, we note that an affirmative answer to \autoref{question_constant} would follow from the generic freeness of local cohomology modules of a Rees algebra; we prove this generic freeness for the case of up to $n+2$ points in projective $n$-space over a field, see \autoref{thm_points} (where the answer to \autoref{question_constant} is known to be affirmative). However, in \autoref{nine_points}, we show that the hoped-for generic freeness fails in the case of nine points in $\PP^2$. This, in turn, yields a local cohomology module that has infinitely many associated primes --- a construction of independent interest (see \autoref{thm_9points}):

\begin{headthm}
Let $\ba\colonequals\left\{a_{ij} \mid 1\le i\le 9,\ 0\le j \le 2\right\}$ be a set of indeterminates over $\CC$, and set $A\colonequals\CC[\ba]$.
Let $R$ be the polynomial ring $A[x_0,x_1,x_2]$, and for $1\le i\le 9$, consider the ideals
\[
\frakA_i \colonequals I_2 \begin{pmatrix}
x_0 & x_1 & x_2 \\
a_{i0} & a_{i1} & a_{i2}
\end{pmatrix}.
\]
Set $I\colonequals\frakA_1\cap\dots\cap\frakA_9 \subset R$ and consider the corresponding Rees algebra $\Rees(I) = \bigoplus_{t\ge 0} I^tT^t$. 
Then the following statements hold: 
\begin{enumerate}[\quad\rm(a)]
\item There is no nonzero element $a \in A$ such that $\HH_{(x_0,x_1,x_2)}^2\left(\Rees(I)\right) \otimes_A A_a$ is a flat $A_a$-module.
\item Each of the following sets of associated primes is infinite:
\[
\Ass_A\left(\HH_{(x_0,x_1,x_2)}^2\left(\Rees(I)\right)\right), \quad
\Ass_R\left(\HH_{(x_0,x_1,x_2)}^2\left(\Rees(I)\right)\right),
\text{\quad and \quad}
\Ass_{\Rees(I)}\left(\HH_{(x_0,x_1,x_2)}^2\left(\Rees(I)\right)\right).
\]
\end{enumerate}
\end{headthm}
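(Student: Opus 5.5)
The plan is to prove part (b) for $\Ass_A$ first, and then deduce (a) and the remaining parts of (b) from it. Write $\frakn=(x_0,x_1,x_2)$. Since $\Rees(I)=\bigoplus_t I^tT^t$ and local cohomology with respect to $\frakn$ respects the $T$-grading, we have
\[
\HH^2_\frakn(\Rees(I))=\bigoplus_{t\ge0}\HH^2_\frakn(I^t)\cong\bigoplus_{t\ge 1}\HH^1_\frakn(R/I^t)\oplus\HH^2_\frakn(R).
\]
Here $\HH^2_\frakn(R)=0$ because $R$ is a polynomial ring in three variables over $A$. Saturating $I^t$ does not change $\HH^1_\frakn$, so $\HH^1_\frakn(R/I^t)=\HH^1_\frakn(R/\widetilde{I^t})$. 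Generically on $\Spec A$ the saturation $\widetilde{I^t}$ is the symbolic power $\frakA_1^t\cap\dots\cap\frakA_9^t$; this also holds after localizing at any height-one prime at which the nine points stay distinct. Sheafifying over $\PP^2$, the degree-$d$ component of $\HH^1_\frakn(R/I^t)$ becomes the cokernel of
\[
R_d\to\HH^0\big(\PP^2,\calO_{Z_t}(d)\big),
\]
where $Z_t$ is the relative fat point scheme $\sum t\bp_i$. Equivalently, it is $\HH^1(\PP^2_A,\scrI_{Z_t}(d))$.

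The geometric input is the classical Halphen picture, and we work in degree $d=3t$. The expected dimension of cubics of degree $3t$ singular to order $t$ at nine points is
\[
\binom{3t+2}{2}-9\binom{t+1}{2}=1.
\]
For nine general points this space is exactly $\langle C^t\rangle$, where $C$ is the unique cubic through them, so $\HH^1(\scrI_{Z_t}(3t))$ has dimension $0$ at the generic point. For each $k\ge1$ let $D_k\subset\Spec A$ be the locus where the class $\calO_C(3)(-\sum\bp_i)\in\Pic^0(C)$ has exact order $k$. Over the generic point of $D_k$ there is a Halphen pencil of degree $3k$, so $h^0(\scrI_{Z_k}(3k))=2$ and hence $h^1=1$.

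We then need two facts about $D_k$. First, $D_k$ contains a component of codimension one, which is a divisor: $C$ varies with the points, and the torsion condition is one equation on $\Pic^0$. Second, these divisors are pairwise distinct for distinct $k$, so their generic points give infinitely many height-one primes $P_k\subset A$.

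The main step, and the expected obstacle, is converting this jump in fibre dimension into an associated prime of the $A$-module $N_k\colonequals\HH^1(\PP^2_A,\scrI_{Z_k}(3k))$. We localize at $P_k$, so that $V=A_{P_k}$ is a DVR; we must check there that $Z_k$ is flat over $V$, since the nine points remain distinct at the generic point of $D_k$. The module $\HH^0(\scrI_{Z_k}(3k))_V$ is torsion-free, hence free, and its rank is the generic value $1$. The special fibre, however, has dimension $2$, so the base-change map for $\HH^0$ to the residue field is not surjective. By cohomology and base change, as in Grauert's theorem over a DVR, this forces $\HH^1(\scrI_{Z_k}(3k))_V$ to fail to be free. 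The generic rank of $N_k$ is $0$, so failing to be free means $N_k\otimes V$ is a nonzero torsion module, and hence $P_kV\in\Ass_V$. It follows that $P_k\in\Ass_A(N_k)\subseteq\Ass_A(\HH^2_\frakn(\Rees(I)))$. The care needed here is in two places: identifying the saturation of $I^k$ with the ideal sheaf of $Z_k$ after localizing at $P_k$, and applying cohomology and base change to the non-finite module $\HH^1_\frakn$ one graded piece at a time. Each graded piece is a finitely generated $A$-module, because it is the cohomology of a coherent sheaf on $\PP^2_A$.

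The remaining statements then follow formally. For (a), suppose $M\otimes_A A_a$ were flat for some nonzero $a\in A$. A flat module over a domain is torsion-free, so every associated prime of $M=\HH^2_\frakn(\Rees(I))$ other than $0$ would contain $a$. Only finitely many height-one primes contain $a$, which contradicts the infinitely many $P_k$. For the other two sets in (b), take $z\in M$ with $\operatorname{ann}_A(z)=P_k$ and pass to the cyclic module $Rz$, respectively $\Rees(I)z$. For a Noetherian algebra $B$ over $A$ and a $B$-module $L$, each element of $\Ass_A(L)$ is the contraction of some element of $\Ass_B(L)$; we apply this with $B=R$ and with $B=\Rees(I)$. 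Hence $\Ass_R(M)$ and $\Ass_{\Rees(I)}(M)$ each contain primes contracting to every $P_k$, and so they are infinite as well.
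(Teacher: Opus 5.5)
Your argument is correct, but it takes a different route from the paper's.

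\textbf{The paper's route.} The paper never locates the associated primes. It assumes that, after inverting some $a$, the set $\bigcup_t \Ass(M_t)$ is finite, where $M_t=\HH^1(\PP^2_A,\scrI^t(3t))$. From this it deduces that nonvanishing of $\HH^1(\scrI_S^t(3t))$ at closed points $S$ of a suitable open set $U$ is already decided by $t\le t_0$. It then contradicts this on one fixed smooth cubic $C$, using two facts: the map $\Phi\colon C^9\cap\pi(U)\to\Pic^0(C)$ is dominant, so it hits torsion points of order $>t_0$; and $h^1(\scrI_S^t(3t))=\sum_{k\le t}\gamma(k,S)$. Part (a) falls out of the same contradiction, because flatness of the finitely generated modules $M_t\otimes_A A_a$ forces that finiteness hypothesis.

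\textbf{Your route.} You exhibit explicit height-one associated primes $P_k$, the generic points of the Halphen divisors, and detect them by cohomology and base change over the DVR $A_{P_k}$. You then get (a) from (b): a flat $A_a$-module is torsion-free, so every nonzero associated prime contains $a$, and only finitely many height-one primes do.

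\textbf{What each buys.} Your conclusion is sharper: each $P_k$ is a height-one associated prime of the single graded piece $\HH^2_\frakn(I^k)_{3k}$. You also give a clean treatment of $\Ass_R$ and $\Ass_{\Rees(I)}$ via contraction, which the paper only asserts ``as a consequence''. The price is geometric input over the whole parameter space, which the paper avoids by working on one fixed cubic.

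\textbf{What your sketch needs.} You need the exact-order-$k$ locus to have a codimension-one component, and ``one equation on $\Pic^0$'' only gestures at this. The honest justification is a dimension count inside the $18$-dimensional configuration space. Smooth cubics form a $9$-dimensional family. For each such $C$, the fibres of the summation map $C^9\to\Pic^0(C)$ over the finitely many points of exact order $k$ are $8$-dimensional, giving $9+8=17$. The locus injects into $(\PP^2)^9$ for $k\ge 2$, because then the cubic through the nine points is unique.

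\textbf{Two simplifications.}
\begin{itemize}
\item You do not need the full Halphen statement $h^0=2$. The residual sequence $0\to\scrI^{k-1}(3k-3)\to\scrI^k(3k)\to\calO_C(kD)\to 0$ gives a surjection $\HH^1(\scrI^k(3k))\to\HH^1(C,\calO_C)\neq 0$ on $D_k$. Together with $h^1=0$ at the generic point, where $\scrL$ is non-torsion, this is all your DVR argument uses.
\item Distinctness and flatness of the nine points over $A_{P_k}$ are automatic at every height-one prime. The collision loci, cut out by $2\times 2$ minors of pairs of coefficient rows, have codimension $2$ in $\Spec A$.
\end{itemize}
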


This result highlights why the symbolic powers of an ideal of points have historically been, and remain, notoriously difficult to analyze. Indeed, it shows that the symbolic powers of an ideal of points will inevitably have a non-uniform behavior (see \autoref{cor_symb_pow}).

\smallskip

While the question of the finiteness of the Bass numbers of local cohomology modules was raised by Grothendieck~\cite[Expos\'e XIII, Conjecture 1.1]{SGA2} and answered in the negative by Hartshorne~\cite{HARTSHORNE_COFINITENESS}, the question of the finiteness of the set of associated prime ideals is due to Huneke~\cite[Problem~4]{HUNEKE}. Affirmative answers include the case of regular rings of prime characteristic~\cite{HUNEKE_SHARP}, regular local and affine rings of characteristic zero~\cite{LYUBEZNIK93}, unramified regular local rings of mixed characteristic~\cite{LYUBEZNIK00}, smooth $\ZZ$-algebras~\cite{BBLSZ1}, and rings of positive prime characteristic with finite Frobenius representation type \cite{TAKAGI_TAKAHASHI, HOCHSTER_NUNEZ, DAO_QUY}; other positive results over rings of low dimension may be found in~\cite{MARLEY}. The answer, however, is negative in general; the counterexamples in \cite{SINGH, JEFFRIES, JEFFRIES_SINGH} come from integer torsion in the local cohomology of finitely generated algebras over the ring of integers, while those in \cite{KATZMAN, SINGH_SWANSON} are for affine algebras over a field. The example constructed in \autoref{nine_points} is arguably more geometric in flavor: it arises from torsion points in elliptic curves over the complex numbers.

\section{Ample vector bundles relative to a base scheme}
\label{sect_ample_vect}

The notion of \emph{ample vector bundles} on a scheme of finite type over a field was introduced by Hartshorne \cite{HARTSHORNE_AMPLE}; his main motivation was precisely to study the cohomology of thickenings as in~\autoref{equation_map_thickenings}. For our purposes, we require a relative version of this notion over an arbitrary Noetherian base scheme. While this extension is possibly well-known to experts, we were unable to locate a reference that treats it in a form suitable for our needs. Accordingly, in this short section, we develop the necessary relative theory.

Let $S$ be a Noetherian scheme and $f\colon X\subset\PP_S^n \to S$ be a projective morphism. Let $\scrE$ be a \emph{vector bundle} on $X$, i.e., a locally free sheaf of finite rank. Consider the projectivization 
\[
\CD
\bP(\scrE)\colonequals\sProj_X\left(\Sym_{\calO_X}(\scrE)\right) @>\pi>>X,
\endCD
\]
where we follow Grothendieck’s convention regarding projective bundles. Let 
\[
g = f\circ\pi\colon \bP(\scrE)\ \to \ S
\]
denote the composition of $f$ and $\pi$. By extending the notion of a relatively ample line bundle, see, for example, \cite[\href{https://stacks.math.columbia.edu/tag/01VG}{Tag 01VG}]{stacks-project}, we obtain the following definition:

\begin{definition}
A vector bundle $\scrE$ on $X$ is \emph{$f$-ample} if the canonical line bundle $\calO_{\bP(\scrE)}(1)$ is $g$-ample.
\end{definition}

Let $\kappa(s)$ denote the residue field at a point $s\in S$. Consider the fiber
\[
X_s\ \colonequals\ X \times_S\Spec(\kappa(s))\ \subset\ \PP_{\kappa(s)}^n
\]
and the restriction vector bundle $\scrE_s \colonequals \scrE \otimes_{\calO_S} \kappa(s)$ on $X_s$. Using the fiberwise criterion for the ampleness of line bundles, \cite[Th\'eor\`eme~4.7.1]{EGAIII_1} and \cite[Corollaire~9.6.4]{EGAIV_3}, we derive a corresponding fiberwise criterion for the ampleness of vector bundles:

\begin{theorem}
\label{thm_fiberwise_crit}
The vector bundle $\scrE$ on $X$ is $f$-ample if and only if the restriction vector bundle $\scrE_s$ on $X_s$ is ample for each $s\in S$.
\end{theorem}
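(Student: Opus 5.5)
The plan is to reduce the statement to the fiberwise criterion for relatively ample line bundles, applied to the projective morphism $g=f\circ\pi\colon\bP(\scrE)\to S$ and the line bundle $\calO_{\bP(\scrE)}(1)$. By definition, $\scrE$ is $f$-ample if and only if $\calO_{\bP(\scrE)}(1)$ is $g$-ample. Since $\pi$ is projective and $f$ is projective with $S$ Noetherian, $g$ is a proper (indeed projective) morphism of finite presentation. The criterion \cite[Th\'eor\`eme~4.7.1]{EGAIII_1}, \cite[Corollaire~9.6.4]{EGAIV_3} then says that $\calO_{\bP(\scrE)}(1)$ is $g$-ample if and only if its restriction to each fiber $\bP(\scrE)_s\colonequals\bP(\scrE)\times_S\Spec\kappa(s)$ is ample on that fiber, for every $s\in S$.

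The second step identifies the fibers. Projectivization commutes with base change, because $\Sym$ commutes with pullback and $\sProj$ commutes with base change. Hence
\[
\bP(\scrE)_s\ \cong\ \sProj_{X_s}\left(\Sym_{\calO_{X_s}}(\scrE_s)\right)\ =\ \bP(\scrE_s),
\]
and under this isomorphism $\calO_{\bP(\scrE)}(1)$ restricts to $\calO_{\bP(\scrE_s)}(1)$. Here $\scrE_s$ is the pullback of $\scrE$ to $X_s$, which is again a vector bundle.

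The third step invokes Hartshorne's definition. Over the field $\kappa(s)$, with $X_s$ projective of finite type, $\scrE_s$ is ample precisely when $\calO_{\bP(\scrE_s)}(1)$ is ample on $\bP(\scrE_s)$ \cite{HARTSHORNE_AMPLE}. Combining the three steps gives the claimed equivalence: $\scrE$ is $f$-ample if and only if $\calO_{\bP(\scrE_s)}(1)$ is ample for every $s$, if and only if $\scrE_s$ is ample for every $s$.

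The only point needing care is the hypotheses of the EGA fiberwise criterion. Th\'eor\`eme~4.7.1 of EGA~III is stated for proper morphisms to a Noetherian base, and it gives ampleness relative to $S$ from ampleness on fibers. The passage between ampleness over the fibers $\Spec\kappa(s)$ and ampleness in a neighborhood of $s$ uses openness of the ample locus, as in \cite[Corollaire~9.6.4]{EGAIV_3}. I expect no further obstacle beyond checking that $g$ is proper, which holds because both $\pi$ and $f$ are projective with Noetherian source and target. The converse direction is immediate, since relative ampleness is stable under base change to the fibers.
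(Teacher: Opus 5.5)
Your proposal is correct and follows the paper's proof: you apply the EGA fiberwise criterion for relatively ample line bundles to $\calO_{\bP(\scrE)}(1)$ on $g\colon\bP(\scrE)\to S$, identify the fiber $\bP(\scrE)_s$ with $\bP(\scrE_s)$ compatibly with $\calO(1)$, and use Hartshorne's characterization of ampleness of a vector bundle over a field. Your write-up also spells out two things the paper's one-line proof leaves implicit, namely that $\bP(-)$ commutes with base change and that $g$ is proper.
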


\begin{proof}
The aforementioned fiberwise criterion implies that $\calO_{\bP(\scrE)}(1)$ is $g$-ample if and only if
\[
\calO_{\bP(\scrE_s)}(1)\ \cong\ \calO_{\bP(\scrE)}(1)\otimes_{\calO_S}\kappa(s)
\]
is ample for each $s\in S$. 
\end{proof}

Next, assume that $f\colon X\subset\PP_S^n\to S$ is a smooth morphism. Our object of interest is the normal bundle to $X$ in $\PP_S^n$, namely
\[
\scrN_X \colonequals \sHom_{\calO_X}\big(\scrI_X/\scrI_X^2,\, \calO_X\big).
\]

\begin{lemma}
\label{lem_ample_vect}
If $f\colon X \subset\PP_S^n\to S$ is a smooth morphism, then the normal bundle $\scrN_X$ is $f$-ample.
\end{lemma}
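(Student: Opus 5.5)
The plan is to reduce to the fibers using \autoref{thm_fiberwise_crit}, and then invoke the classical statement over a field that the normal bundle of a smooth subvariety of projective space is ample. Since $f$ is smooth, $X\into\PP^n_S$ is a regular immersion. This holds because both $X$ and $\PP^n_S$ are smooth over $S$, so the immersion is a local complete intersection and $\scrI_X/\scrI_X^2$ is locally free. Consequently $\scrN_X$ is a vector bundle on $X$, and the statement makes sense. By \autoref{thm_fiberwise_crit}, it suffices to show that $(\scrN_X)_s$ is an ample vector bundle on $X_s$ for every $s\in S$.

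The first step is the compatibility of the conormal sheaf with base change. Because $X$ is flat over $S$ and the immersion is regular (with regular sequences that stay regular on fibers, $X$ being $S$-flat), we get $\scrI_X\otimes_{\calO_S}\kappa(s)\cong\scrI_{X_s}$, where $X_s\subset\PP^n_{\kappa(s)}$. It follows that $(\scrI_X/\scrI_X^2)\otimes\kappa(s)\cong\scrI_{X_s}/\scrI_{X_s}^2$. Dualizing commutes with base change for locally free sheaves, so $(\scrN_X)_s\cong\scrN_{X_s}$.

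The second step is the fiber statement: $X_s$ is a smooth closed subscheme of $\PP^n_{\kappa(s)}$, and we need $\scrN_{X_s}$ ample. We use the conormal exact sequence
\[
0\to\scrI_{X_s}/\scrI_{X_s}^2\to\Omega_{\PP^n}|_{X_s}\to\Omega_{X_s}\to 0,
\]
which is exact on the left because $X_s$ is smooth. Dualizing it gives a surjection $T_{\PP^n}|_{X_s}\to\scrN_{X_s}$. The Euler sequence exhibits $T_{\PP^n}$ as a quotient of $\calO(1)^{\oplus(n+1)}$, which is ample. Quotients of ample bundles are ample, since $\bP$ of the quotient embeds in $\bP$ of the original bundle compatibly with $\calO(1)$, and restrictions to closed subschemes stay ample. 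Hence $\scrN_{X_s}$ is ample. This is \cite[Proposition~4.1]{HARTSHORNE_AMPLE}-type reasoning and works over any field, including fields that are not algebraically closed; alternatively one can base change to $\overline{\kappa(s)}$, since ampleness descends.

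The main obstacle, though a mild one, is the base-change identification $(\scrN_X)_s\cong\scrN_{X_s}$. Its proof relies on $S$-flatness of $X$ to ensure that $\mathrm{Tor}_1^{\calO_S}(\calO_X,\kappa(s))=0$, so that tensoring $0\to\scrI_X\to\calO_{\PP^n_S}\to\calO_X\to0$ with $\kappa(s)$ stays exact. It also relies on local freeness of the conormal sheaf. Once this identification is in place, the lemma follows from \autoref{thm_fiberwise_crit}.
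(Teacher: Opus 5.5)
Your proposal is correct and follows the paper's argument: reduce via \autoref{thm_fiberwise_crit} to the fibers, and use that the normal bundle of the smooth subscheme $X_s\subset\PP^n_{\kappa(s)}$ is ample. The paper cites this fiber statement from Hartshorne (Proposition~III.2.1 of his notes on ample subvarieties) and takes $\scrN_X\otimes_{\calO_S}\kappa(s)\cong\scrN_{X_s}$ for granted, whereas you justify that identification via $S$-flatness and local freeness of the conormal sheaf, and reprove the fiber statement with the conormal and Euler sequences, essentially Hartshorne's own argument.
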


\begin{proof}
For each $s\in S$, \cite[Proposition~III.2.1]{HARTSHORNE_AMPLE_NOTES} implies that the normal bundle to $X_s$ in $\PP_{\kappa(s)}^n$, i.e., $\scrN_X\otimes_{\calO_S}\kappa(s)$, is an ample vector bundle on $X_s$. Therefore $\scrN_X$ is $f$-ample by \autoref{thm_fiberwise_crit}.
\end{proof}

Our main interest here lies in the following extension of a well-known vanishing result of Hartshorne, \cite[Theorem~8.1]{HARTSHORNE_AMPLE}; see also \cite[Theorem~2.8]{BBLSZ2}. A key point in the vanishing result below is the existence of a stabilization index that is uniform across all fibers.

\begin{theorem}
\label{thm_uniform_vanish}
Let $S$ be a Noetherian scheme over the field of rational numbers, and $f\colon X\subset\PP_S^n\to S$ be a smooth morphism of relative dimension $d\ge 0$. Let $\calF$ be a vector bundle on $X$, and set $\calF_s\colonequals \calF\otimes_{\calO_S}\kappa(s)$ for $s\in S$. Then there exists an integer $t_0>0$ such that 
\[
\HH^i\left(X_s,\, \Sym^t\big(\scrI_{X_s}/\scrI_{X_s}^2\big)\otimes_{\calO_{X_s}}\calF_s\right)\ =\ 0
\]
for all $s\in S$, $t\ge t_0$, and $i\neq d$.
\end{theorem}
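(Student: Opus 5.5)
The plan is to reduce, fiber by fiber, to Serre duality and then obtain uniformity in $s$ from a relative Serre vanishing on the projective bundle $\bP(\scrN_X)$, using the $f$-ampleness supplied by \autoref{lem_ample_vect}. The statement can only hold with the condition $t\ge t_0$, so one must work with $\bP(\scrN_X)$ rather than with each fiber separately: the point is that a single $t_0$ serves every $s\in S$.

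First, the cases $i>d$ are automatic, since each fiber $X_s$ has dimension $d$. Next, I record the compatibilities needed to pass between $X$ and its fibers. Because $X\to S$ and $\PP^n_S\to S$ are both smooth, $X\into\PP^n_S$ is a regular immersion. Its conormal sheaf $\scrI_X/\scrI_X^2$ is therefore locally free of rank $n-d$, and its formation commutes with base change, giving
\[
\scrI_{X_s}/\scrI_{X_s}^2\cong(\scrI_X/\scrI_X^2)\otimes_{\calO_S}\kappa(s), \qquad \scrN_{X_s}\cong\scrN_X\otimes_{\calO_S}\kappa(s).
\]
Similarly, the relative dualizing sheaf $\omega_{X/S}=\det\Omega_{X/S}$ is invertible and restricts to $\omega_{X_s}$. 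For $i<d$, Serre duality on the smooth projective $d$-dimensional $X_s$ then gives
\[
\HH^i\big(X_s,\Sym^t(\scrI_{X_s}/\scrI_{X_s}^2)\otimes\calF_s\big)^\vee\ \cong\ \HH^{d-i}\big(X_s,\Sym^t(\scrN_{X_s})\otimes\calG_s\big), \qquad \calG\colonequals\calF^\vee\otimes\omega_{X/S}.
\]
Here $\calG$ is a vector bundle on $X$, and $d-i\ge1$. So it suffices to find $t_0$ with $\HH^j(X_s,\Sym^t(\scrN_{X_s})\otimes\calG_s)=0$ for all $s\in S$, all $j\ge1$, and all $t\ge t_0$.

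To obtain this, let $\pi\colon P\colonequals\bP(\scrN_X)\to X$ and $g=f\circ\pi$. We have $\pi_*\calO_P(t)=\Sym^t\scrN_X$ and $R^q\pi_*\calO_P(t)=0$ for $q>0$ and $t\ge0$, and these identities persist on each fiber $P_s=\bP(\scrN_{X_s})$. The projection formula and the Leray spectral sequence then identify $\HH^j(X_s,\Sym^t(\scrN_{X_s})\otimes\calG_s)$ with $\HH^j(P_s,\calO_{P_s}(t)\otimes\pi^*\calG_s)$. By \autoref{lem_ample_vect}, $\calO_P(1)$ is $g$-ample. Moreover, $\calH\colonequals\pi^*\calG$ is locally free on $P$, which is smooth over $S$, so $\calH$ is $S$-flat.

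The main obstacle is the uniform fiberwise vanishing: we need $t_0$ such that $\HH^j(P_s,\calH_s(t))=0$ for all $j\ge1$, all $s$, and all $t\ge t_0$. I would argue as follows. Since the question is local on $S$ and $S$ is Noetherian, and since some power of $\calO_P(1)$ is $g$-very ample, we may treat each residue $t\bmod m$ separately after twisting. Relative Serre vanishing (EGA III, 2.2.1) then gives $t_0$ with $R^jg_*\calH(t)=0$ for all $j\ge1$ and $t\ge t_0$. Because $\calH(t)$ is $S$-flat and $R^jg_*\calH(t)=0$ for every $j\ge1$, descending induction on $j$ via the cohomology and base change theorem shows that the base change maps $R^jg_*\calH(t)\otimes\kappa(s)\to\HH^j(P_s,\calH_s(t))$ are surjective, hence isomorphisms onto $0$. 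The induction starts at $j>\dim$ of the fibers, where everything vanishes. This yields the required fiberwise vanishing for all $s$ simultaneously.

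If the base change step causes trouble, the fallback is Noetherian induction on $S$. Over each stratum one would use generic flatness together with upper semicontinuity, and the finitely many strata would give a single $t_0$. Dualizing back through the Serre duality isomorphism above then finishes the proof.
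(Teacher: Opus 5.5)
Your proposal is correct and is essentially the paper's proof: Serre duality on each fiber reduces to $\Sym^t(\scrN_X)\otimes\calG$ with $\calG=\calF^\vee\otimes\omega_{X/S}$, then come $g$-ampleness of $\calO_{\bP(\scrN_X)}(1)$, relative Serre vanishing for $j\ge 1$, and descending induction via cohomology and base change (you run the base change on $\bP(\scrN_X)\to S$ rather than on $X\to S$, which is immaterial, and the Noetherian-induction fallback is not needed). You should make explicit that your duality step silently uses $\big(\Sym^t(\scrI_{X_s}/\scrI_{X_s}^2)\big)^\vee\cong\Sym^t(\scrN_{X_s})$, which holds because every $\kappa(s)$ has characteristic zero; this is exactly where the hypothesis that $S$ lies over $\QQ$ enters, since in characteristic $p$ the dual is a divided power rather than $\pi_*\calO_{\bP(\scrN)}(t)$.
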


\begin{proof}
Let $s \in S$. By Serre duality, e.g., \cite[\S III.7]{HARTSHORNE}, the cohomology group displayed above is dual to the cohomology group 
\[
\HH^{d-i}\Big(X_s,\, \left(\Sym^t(\scrI_{X_s}/\scrI_{X_s}^2)\right)^\vee\otimes_{\calO_{X_s}}\calF_s^\vee \otimes_{\calO_{X_s}}\omega_{X_s}\Big),
\]
where $(-)^\vee$ denotes $\sHom_{\calO_{X_s}}(-,\, \calO_{X_s})$. The characteristic zero assumption yields a natural isomorphism 
\[
\sHom_{\calO_X}\left(\Sym^t(\scrI_X/\scrI_X^2),\, \calO_X\right)\ \cong\ \Sym^t(\scrN_X),
\]
see, for example, \cite[Proposition~A2.7]{EISEN_COMM}. Consider the vector bundle
\[
\calG \colonequals \calF^\vee \otimes_{\calO_X} \omega_{X/S}
\]
on $X$. After the above considerations, it suffices to find an integer $t_0 > 0$ such that 
\[
\HH^j\big(X_s,\, \Sym^t(\scrN_X)\otimes_{\calO_X}\calG\otimes_{\calO_S}\kappa(s)\big)\ =\ 0
\]
for all $s\in S$, $t\ge t_0$, and $j\neq 0$. We now proceed to prove this claim. 

Let $\scrN=\scrN_X$, $\pi\colon \bP(\scrN)\to X$, and $g=f\circ\pi \colon \bP(\scrN)\to S$. By \autoref{lem_ample_vect}, $\scrN$ is an $f$-ample vector bundle on $X$, so $\calO_{\bP(\scrN)}(1)$ is a $g$-ample line bundle on $\bP(\scrN)$. Consider the Grothendieck spectral sequence 
\[
E_2^{p,q}\ =\ R^pf_*\left(R^q\pi_*\big(\calO_{\bP(\scrN)}(t) \otimes_{\calO_{\bP(\scrN)}} \pi^*(\calG)\big)\right)
\ \Longrightarrow\
R^{p+q}g_*\left(\calO_{\bP(\scrN)}(t) \otimes_{\calO_{\bP(\scrN)}} \pi^*(\calG)\right).
\]
We have the isomorphism $\pi_*\big(\calO_{\bP(\scrN)}(t) \otimes_{\calO_{\bP(\scrN)}} \pi^*(\calG)\big)\ \cong\ \Sym^t(\scrN)\otimes_{\calO_X}\calG$ and the vanishing 
\[
R^q\pi_*\left(\calO_{\bP(\scrN)}(t) \otimes_{\calO_{\bP(\scrN)}} \pi^*(\calG)\right)\ =\ 0
\quad \text{ for all } q \ge 1;
\]
see \cite[Lemma~3.1]{HARTSHORNE_AMPLE}, \cite[Exercise~III.8.4]{HARTSHORNE}. Hence, the above spectral sequence degenerates and yields the isomorphism
\[
R^jf_*\left(\Sym^t(\scrN) \otimes_{\calO_X} \calG\right)
\ \cong\
R^jg_*\left(\calO_{\bP(\scrN)}(t) \otimes_{\calO_{\bP(\scrN)}} \pi^*(\calG)\right) 
\quad \text{ for all } j \ge 0.
\]
Since $\calO_{\bP(\scrN)}(1)$ is $g$-ample, there is an integer $t_0 > 0$ such that 
\[
R^jf_*\left(\Sym^t(\scrN)\otimes_{\calO_X}\calG\right)\ \cong\
R^jg_*\left(\calO_{\bP(\scrN)}(t)\otimes_{\calO_{\bP(\scrN)}} \pi^*(\calG)\right)\ =\ 0
\]
for all $j \ge 1$ and $t \ge t_0$, see \cite[\href{https://stacks.math.columbia.edu/tag/02O1}{Tag 02O1}]{stacks-project}, \cite[Theorem~III.8.8]{HARTSHORNE}.

Fix $t \ge t_0$. Consider the natural base change map 
\[
\CD
\varphi^j(s)\colon R^jf_*\left(\Sym^t(\scrN)\otimes_{\calO_X}\calG\right)\otimes_{\calO_S} \kappa(s)
@>>>
\HH^j\left(X_s,\, \Sym^t(\scrN_s)\otimes_{\calO_{X_s}}\calG_s\right).
\endCD
\]
The cohomology and base change theorem, \cite[Theorem~III.12.11]{HARTSHORNE}, \cite[\S7.7]{EGAIII_2}, \cite[\S5]{MUMFORD_ABELIAN}, tells us that: (a) $\varphi^j(s)$ is an isomorphism if it is surjective; and (b) if $\varphi^j(s)$ is surjective, then $\varphi^{j-1}(s)$ is surjective if and only if $R^jf_*\left(\Sym^t(\scrN) \otimes_{\calO_X}\calG\right)$ is locally free in a neighborhood of $s$.

Finally, by descending induction on $j\le d$, since $\varphi^d(s)$ is surjective, we obtain the vanishing 
\[
\HH^j\left(X_s,\, \Sym^t(\scrN_s)\otimes_{\calO_{X_s}}\calG_s\right)\ =\ 0
\]
for all $j \ge 1$, see \cite[\S5,~Corollary 4]{MUMFORD_ABELIAN}. This completes the proof of the theorem.
\end{proof}

\section{Generic flatness in the smooth setting}
\label{sect_flatness}

In this section, we complete the proof of \autoref{thm_A}, i.e., under a smoothness hypothesis, we prove that there exists a flatness locus for the cohomology of all thickenings. The main tools are the vanishing result, \autoref{thm_uniform_vanish}, and the generic freeness result for local cohomology modules stated in \autoref{thm_CRS}.

We fix the notation that we will use throughout this section. Let $A$ be a Noetherian domain containing a field of characteristic zero. Let $R\colonequals A[x_0,\dots,x_n]$ be a standard graded polynomial ring over $A$, i.e., $\deg(x_i)=1$, and $\deg(a)=0$ for $a \in A$. Set $\frakM \colonequals (x_0,\dots,x_n)$ and $\PP_A^n \colonequals \Proj(R)$.

Let $Q\colonequals \Frac A$ be the field of fractions of $A$. For $\frakp\in\Spec A$, let $\kappa(\frakp) \colonequals A_\frakp/\frakp A_\frakp$ be the residue field at $\frakp$, and set
\[
R(\frakp)\colonequals R\otimes_A\kappa(\frakp)\ \cong\ \kappa(\frakp)[x_0,\dots,x_n].
\]
For $M$ a finitely generated graded $R$-module, set $M(\frakp)\colonequals M\otimes_A\kappa(\frakp)$.

Let $X\subset\PP_A^n$ be a closed subscheme with ideal sheaf $\scrI_X\subset\calO_{\PP_A^n}$. For $\frakp\in\Spec A$, consider the fiber 
\[
X_\frakp\ \colonequals\ X\times_{\Spec A}\Spec(\kappa(\frakp))\ \subset\ \PP_{\kappa(\frakp)}^n.
\]
For $t \ge 1$, the \emph{$t$-th thickening} of $X$ in $\PP_A^n$ is the closed subscheme of $\PP_A^n$ determined by the $t$-th power of the ideal sheaf $\scrI_X$ of $X$, namely
\[
X_t\colonequals V\left(\scrI_X^t\right)\ \subset\ \PP_A^n.
\]
For a coherent sheaf $\calF$ on $X$, set $\calF_\frakp \colonequals \calF \otimes_A \kappa(\frakp)$.

The following proposition yields a version of the graded local duality theorem relative to our Noetherian base ring $A$. Results of this type have also appeared in \cite{HOCHSTER_ROBERTS, KSMITH, GEN_FREENESS_LOC_COHOM, FIBER_FULL}.

\begin{proposition}
\label{prop_fib_full_mod}
Let $R= A[x_0,\dots,x_n]$ be a standard graded polynomial ring over $A$, and $M$ a finitely generated graded $R$-module that is flat over $A$. Fix an integer $j$. Then the following are equivalent:

\begin{enumerate}[\quad\rm(a)]
\item ${\HH_\frakM^i(M)}_j$ is a flat $A$-module for each $i\ge 0$.

\item ${\Ext_R^i(M,\, R)}_{-j-n-1}$ is a flat $A$-module for each $i\ge 0$.

\item The function $\Spec A\to\ZZ$ where $\frakp\mapsto\rank_{\kappa(\frakp)} {\HH_\frakM^i\left(M(\frakp)\right)}_j$ is constant for each $i\ge 0$.

\item The function $\Spec A\to\ZZ$ where
\[
\frakp\mapsto\rank_{\kappa(\frakp)} {\Ext_{R(\frakp)}^i\left(M(\frakp),\, R(\frakp)\right)}_{-j-n-1}
\]
is constant for each~$i \ge 0$.
\end{enumerate}
Moreover, when these equivalent conditions are satisfied, we have the base change isomorphisms
\[
\CD
{\HH_\frakM^i\left(M\right)}_j \otimes_A B @>\cong>> {\HH_\frakM^i\left(M\otimes_A B\right)}_j
\quad\text{ and }\quad
{\Ext_R^i\left(M,\, R\right)}_j \otimes_A B @>\cong>> {\Ext_{R_B}^i\left(M\otimes_A B,\, R_B\right)}_j
\endCD
\]
for each $i\ge 0$, where $B$ is an arbitrary $A$-algebra, and $R_B \colonequals R \otimes_A B$.
\end{proposition}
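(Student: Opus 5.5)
The plan is to reduce everything to the behaviour of a single bounded complex of finite free $A$-modules and then run the standard cohomology-and-base-change argument. Since $M$ is finitely generated graded and flat over $A$, choose a graded free resolution $F_\bullet\to M$ over $R$ by finitely generated graded free $R$-modules. It may be taken of finite length $n+1$ after truncation: the $(n+1)$-st syzygy is $A$-flat, because $M$ and all the $F_i$ are, and its fibres have projective dimension zero over $R(\frakp)$, so it is projective. Set $C^\bullet\colonequals \Hom_R(F_\bullet,R)$. Each graded component $C^\bullet_{-j-n-1}$ is a bounded complex of finitely generated free $A$-modules, and its cohomology is ${\Ext_R^i(M,R)}_{-j-n-1}$. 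Flatness of $M$ over $A$ makes $F_\bullet\otimes_A\kappa(\frakp)$ a resolution of $M(\frakp)$. Hence $C^\bullet\otimes_A B$ computes $\Ext_{R_B}^i(M\otimes_A B,R_B)$ for every $A$-algebra $B$.

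The \v{C}ech side is handled the same way. The \v{C}ech complex of $M$ on $x_0,\dots,x_n$, in degree $j$, commutes with $-\otimes_A B$. Graded local duality over the field $\kappa(\frakp)$ gives
\[
{\HH_\frakM^i(M(\frakp))}_j\cong \Hom_{\kappa(\frakp)}\big({\Ext^{n+1-i}_{R(\frakp)}(M(\frakp),R(\frakp))}_{-j-n-1},\kappa(\frakp)\big).
\]
This dictionary gives (c)$\Leftrightarrow$(d) immediately. To relate the $A$-level modules I would prove a relative duality: there is a natural isomorphism, compatible with base change,
\[
{\HH_\frakM^i(M)}_j\cong \HH^{i}\big(\Hom_A(C^{\bullet}_{-j-n-1},A)[\text{shift } n+1]\big).
\]
It comes from $\HH_\frakM^{n+1}(R)_{-n-1-k}\cong \Hom_A(R_{k},A)$, which holds because $\HH^{n+1}_\frakM(R)$ is the graded $A$-dual of $R$ up to the shift by $-n-1$, together with the vanishing of the other $\HH_\frakM^i(R)$. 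Applying $\HH_\frakM$ to the free resolution then gives a double complex, and in degree $j$ this yields a bounded complex $D^\bullet$ of finite free $A$-modules whose cohomology is ${\HH^i_\frakM(M)}_j$. The complex $D^\bullet$ is the $A$-dual of $C^\bullet_{-j-n-1}$, and its formation commutes with base change.

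With both sides reduced to a finite free complex $K^\bullet$ and its $A$-dual, the equivalences follow from the standard lemma for bounded complexes of finite free modules over a Noetherian ring. If all $\HH^i(K^\bullet\otimes\kappa(\frakp))$ have locally constant rank, then all cohomology modules of $K^\bullet$ are flat, and cohomology commutes with arbitrary base change. Conversely, if all cohomology is flat, the complex splits locally into a sum of shifted free modules and contractible pieces, so the fibre ranks are constant. I would prove this as in Mumford's \S5 or via the semicontinuity statements cited in \autoref{thm_uniform_vanish}. Constancy must be read on connected $\Spec A$, which holds since $A$ is a domain. Also, flatness of the cohomology of $K^\bullet$ for all $i$ is equivalent to flatness of the cohomology of its dual. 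Combining these gives (a)$\Leftrightarrow$(c) and (b)$\Leftrightarrow$(d), and the base change isomorphisms as a byproduct. For (a), the components ${\HH^i_\frakM(M)}_j$ are finitely generated over $A$ because $D^\bullet$ consists of finite free modules.

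The main obstacle is the relative duality step: producing $D^\bullet$ as an honest finite free complex, dual to $C^\bullet_{-j-n-1}$, with the indices and twists matching so that degree $j$ corresponds exactly to degree $-j-n-1$. The remaining steps are routine consequences of working with a bounded complex of finite free $A$-modules.
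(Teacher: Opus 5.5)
Your argument is correct, but it takes a more self-contained route than the paper. The paper's proof consists of three citations:
\begin{itemize}
\item (a)$\Leftrightarrow$(b) and the base change isomorphisms come from \cite[Theorem~A]{FIBER_FULL}, restricted to one graded component;
\item (a)$\Leftrightarrow$(c) comes from the classical cohomology-and-base-change results (EGA~III~7.6.9, Grauert);
\item (c)$\Leftrightarrow$(d) comes from graded local duality over each field $\kappa(\frakp)$.
\end{itemize}
You instead build one bounded complex $C^\bullet_{-j-n-1}$ of finitely generated projective $A$-modules. It computes ${\Ext^i_{R_B}(M\otimes_A B,R_B)}_{-j-n-1}$ for every $A$-algebra $B$. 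You then show, via the resolution/\v{C}ech double complex and the pairing ${\HH^{n+1}_\frakM(R)}_{-n-1-k}\cong\operatorname{Hom}_A(R_k,A)$, that its $A$-dual, shifted by $n+1$, computes ${\HH^i_\frakM(M\otimes_A B)}_j$ for every $B$. This is in effect a proof of the degree-$j$ case of the cited Theorem~A, and it has three payoffs:
\begin{itemize}
\item (a)$\Leftrightarrow$(b) becomes the formal fact that a bounded complex of finitely generated projectives has projective cohomology iff it splits as $\bigoplus \HH^i[-i]$, iff its dual does. This holds over any Noetherian $A$.
\item Base change to $\kappa(\frakp)$ recovers graded local duality on the fibres, so (c)$\Leftrightarrow$(d) needs no separate input.
\item All the base change isomorphisms follow at once. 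For Ext they come out in degree $-j-n-1$, which is what the statement intends.
\end{itemize}
The paper's route is shorter and relies on a published criterion stated for whole modules, restricted to one degree.

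Two points need correcting, though neither breaks the proof.

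\textbf{The truncated syzygy.} The truncated $(n+1)$-st syzygy is $R$-projective by the fibrewise flatness criterion (e.g.\ the corollary to Theorem~22.5 in Matsumura), which you should invoke explicitly. It is graded projective but need not be graded free when $A$ is not local. So your complexes consist of finitely generated projective $A$-modules, not free ones. This is harmless, since everything is local on $\Spec A$.

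\textbf{Your standard lemma needs a reduced base.} The implication ``fibre ranks constant $\Rightarrow$ cohomology flat'' is false over a general Noetherian ring. Take $A=\kk[\epsilon]/(\epsilon^2)$ and $K^\bullet\colon A\xrightarrow{\ \epsilon\ }A$. The fibre ranks are trivially constant, yet $\HH^0(K^\bullet)\cong\kk$ is not flat over $A$. What you need from ``$A$ is a domain'' is reducedness, as in Mumford's \S5 and in Grauert's theorem cited by the paper, not merely connectedness of $\Spec A$. With that hypothesis stated, (c)$\Rightarrow$(a) and (d)$\Rightarrow$(b) go through, and the rest of your argument stands.
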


\begin{proof}
The equivalence (a) $\iff$ (b) follows from \cite[Theorem~A]{FIBER_FULL}; one can restrict this result to the $j$-th graded component, see also the proof of \cite[Proposition~2.11]{FIBER_FULL}. The base change isomorphisms may also be obtained from \cite[Theorem~A]{FIBER_FULL}.

The equivalence (a) $\iff$ (c) follows from classical results on cohomological functors, see for example \cite[Th\'eor\`eme~7.6.9]{EGAIII_2} or \cite[Corollary~12.9]{HARTSHORNE}. The equivalence (c) $\iff$ (d) may be obtained by applying the graded local duality theorem \cite[Theorem~3.6.19]{BRUNS_HERZOG} to the module $M(\frakp)$.
\end{proof}

For our purposes, one of the principal applications of \autoref{prop_fib_full_mod} is the following corollary; this enables us to apply the vanishing proven in \autoref{thm_uniform_vanish}.

\begin{corollary}
\label{cor_vanish_ext}
Let $I \subset R$ be a homogeneous ideal defining a closed subscheme $X \colonequals V(I) \subset \PP_A^n$. Let $M$ be a finitely generated $R/I$-module, and $\calF \colonequals \widetilde{M}$ be the associated coherent sheaf on $X$. Fix a nonnegative integer $d \ge 0$. Assume that:

\begin{enumerate}[\quad\rm(a)]
\item $M$ is $A$-flat,
\item $M_0=0$,
\item $\HH^i\big(X_\frakp,\, \calF_\frakp\big) = 0$ for all $i\neq d$ and $\frakp\in\Spec A$.
\end{enumerate}
Then ${\Ext_R^{n-d}(M,\, R)}_{-n-1}$ is $A$-flat, and ${\Ext_R^{n-i}(M,\, R)}_{-n-1}=0$ for all $i\neq d$. Moreover, we have 
\[
{\Ext_R^i(M,\, R)}_{-n-1} \otimes_A B\ \cong\ {\Ext_{R_B}^i(M_B,\, R_B)}_{-n-1}
\] 
for each $i\ge 0$, where $B$ is an arbitrary $A$-algebra, $R_B\colonequals R\otimes_A B$, and $M_B\colonequals M\otimes_A B$.
\end{corollary}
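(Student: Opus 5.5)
The plan is to apply \autoref{prop_fib_full_mod} with $j=0$, so that the relevant Ext degree is $-j-n-1=-n-1$. I will verify condition (c) of that proposition using hypotheses (b) and (c) together with the usual comparison between graded local cohomology and sheaf cohomology. Once (c) holds, the proposition gives the flatness of every ${\Ext_R^i(M,R)}_{-n-1}$ and the base change isomorphisms for an arbitrary $A$-algebra $B$.

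First I fix $\frakp\in\Spec A$. Since $\widetilde{(\,\cdot\,)}$ commutes with base change, the sheaf associated to $M(\frakp)$ on $\PP^n_{\kappa(\frakp)}$ is $\calF_\frakp$. I will use the standard exact sequence
\[
0\to {\HH^0_\frakM(M(\frakp))}_0\to M(\frakp)_0\to \HH^0(X_\frakp,\calF_\frakp)\to {\HH^1_\frakM(M(\frakp))}_0\to 0,
\]
together with the isomorphisms ${\HH^{k+1}_\frakM(M(\frakp))}_0\cong \HH^k(X_\frakp,\calF_\frakp)$ for $k\ge1$. Hypothesis (b) gives $M(\frakp)_0=M_0\otimes_A\kappa(\frakp)=0$. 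Combining this with hypothesis (c), I conclude that ${\HH^i_\frakM(M(\frakp))}_0=0$ for all $i\ne d+1$, and that ${\HH^{d+1}_\frakM(M(\frakp))}_0\cong \HH^d(X_\frakp,\calF_\frakp)$.

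The main point is to show that $\frakp\mapsto \dim_{\kappa(\frakp)}\HH^d(X_\frakp,\calF_\frakp)$ is constant. By the vanishing just established, this dimension equals $(-1)^d\chi(\calF_\frakp)$. In turn, $\chi(\calF_\frakp)=P_{M(\frakp)}(0)$, where $P_{M(\frakp)}$ is the Hilbert polynomial of $M(\frakp)$. Each graded piece $M_k$ is a finitely generated flat $A$-module, hence locally free. Since $A$ is a domain, $\Spec A$ is connected, so $M_k$ has constant rank and $\dim_{\kappa(\frakp)} M(\frakp)_k=\rank M_k$ is independent of $\frakp$. The Hilbert polynomials of all fibers therefore coincide, so $\chi(\calF_\frakp)$ is constant. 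This verifies condition (c) of \autoref{prop_fib_full_mod}, and hence the flatness of all ${\Ext^i_R(M,R)}_{-n-1}$ and the stated base change isomorphisms.

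Finally, I identify which Ext modules vanish. Graded local duality over $R(\frakp)$ makes ${\HH^{i}_\frakM(M(\frakp))}_0$ dual to ${\Ext^{n+1-i}_{R(\frakp)}(M(\frakp),R(\frakp))}_{-n-1}$. Hence ${\Ext^{n-k}_{R(\frakp)}(M(\frakp),R(\frakp))}_{-n-1}=0$ for every $k\ne d$, including $k=-1$. By the base change isomorphism with $B=\kappa(\frakp)$, the finitely generated flat, hence locally free, $A$-module ${\Ext^{n-k}_R(M,R)}_{-n-1}$ has all fibers zero. By Nakayama it is therefore zero for $k\ne d$. The remaining module ${\Ext^{n-d}_R(M,R)}_{-n-1}$ is flat, being one of the modules covered above.
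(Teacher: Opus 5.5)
Your proof is correct and follows essentially the same route as the paper. You reduce to condition (c) of \autoref{prop_fib_full_mod} with $j=0$, using $M_0=0$ and hypothesis (c) to kill every ${\HH^i_\frakM(M(\frakp))}_0$ except $i=d+1$, and then use constancy of $\chi(\calF_\frakp)$ for the remaining rank. The differences are minor. You justify the constancy of $\chi(\calF_\frakp)$ through the fiber Hilbert functions, each $M_k$ being finitely generated and flat over the connected scheme $\Spec A$, where the paper cites Mumford's local constancy of Euler characteristics in flat families. You also spell out the vanishing of the Ext modules via graded local duality, base change and Nakayama, which the paper leaves implicit.
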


\begin{proof}
We have an exact sequence 
\[
\CD
0 @>>> \HH_\frakM^0\left(M(\frakp)\right) @>>> M(\frakp) @>>> \bigoplus_{j\in\ZZ}\, \HH^0\big(X_\frakp,\, \calF_\frakp(j)\big) @>>> \HH_\frakM^1\left(M(\frakp)\right) @>>> 0
\endCD
\]
and an isomorphism
\[
\HH_\frakM^{i+1}\left(M(\frakp)\right) \,\cong\, \bigoplus_{j\in\ZZ}\HH^i\big(X_\frakp,\, \calF_\frakp(j)\big)\quad \text{ for all } i\ge 1.
\]
From the assumptions, we obtain 
\[
{\HH_\frakM^{i+1}\left(M\left(\frakp\right)\right)}_0\ =\ 0 \quad \text{ for all } i\neq d.
\] 
Moreover, since the function 
\[
\frakp\in\Spec A\ \mapsto\ \chi\left(\calF_\frakp\right)\ =\ \sum_{i\ge 0}(-1)^i\rank_{\kappa(\frakp)} \HH^i(X_\frakp,\, \calF_\frakp)
\]
is constant, \cite[\S5]{MUMFORD_ABELIAN}, we conclude that $\frakp\in\Spec A \mapsto \rank_{\kappa(\frakp)} {\HH_\frakM^{d+1}\left(M\left(\frakp\right)\right)}_0$ is constant as well. The result now follows from \autoref{prop_fib_full_mod}.
\end{proof}

We next recall a result on the generic freeness of local cohomology modules from \cite{CRS}:

\begin{theorem}[{\cite[Theorem~B]{CRS}}]
\label{thm_CRS}
Let $A$ be a Noetherian domain containing a field of characteristic zero, and $I$ be an ideal of $R = A[x_0,\dots,x_n]$. Then there is a nonzero element $a \in A$ such that
\[
\HH_I^i(R) \otimes_A A_a
\]
is $A_a$-free for each $i \ge 0$.
\end{theorem}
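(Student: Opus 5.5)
The plan is to prove this with relative $D$-modules. Write $D\colonequals D_{R/A}=A\langle x_0,\dots,x_n,\partial_0,\dots,\partial_n\rangle$ for the ring of $A$-linear differential operators on $R$. It is left Noetherian, and its order filtration has $\gr D\cong A[x_0,\dots,x_n,\xi_0,\dots,\xi_n]$. Choose generators $f_1,\dots,f_r$ of $I$. Then $\HH^i_I(R)$ is the $i$-th cohomology of the Čech complex $C^\bullet$ on $f_1,\dots,f_r$. Its terms are products of localizations $R_{f_S}$ with $f_S=\prod_{k\in S}f_k$, and its differentials are $D$-linear. Since $A\to A_a$ is flat, $\HH^i_I(R)\otimes_AA_a\cong \HH^i_{IR_a}(R_a)$. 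So we may freely replace $A$ by $A_a$ at each step, and it suffices to find one $a$ after which every $\HH^i_I(R)$ is a finitely generated $D$-module that is free over $A$.

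\emph{Step 1: finite generation after inverting $a$.} Over $Q=\Frac A$, each $f=f_S$ has a Bernstein--Sato functional equation $b(s)f^s=P(s)f^{s+1}$ with $b\in\QQ[s]$ nonzero and $P\in D_{R_Q/Q}[s]$. The roots of $b$ are negative rationals, and this step uses that $A$ contains $\QQ$. Only finitely many coefficients of $P$ appear, so after inverting a single $a$ the same equation holds over $A_a$. Specializing $s$ to integers $-k$ below all roots of $b$, the values $b(-k)$ are nonzero rationals, hence units in $A_a$. Therefore $(R_a)_f=D_{R_a/A_a}\cdot f^{-k_0}$ for a suitable $k_0$, and each term of $C^\bullet$ is a cyclic $D$-module. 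Because $D$ is left Noetherian, the kernels, images and cohomology modules $\HH^i_I(R)$ are finitely generated $D$-modules.

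\emph{Step 2: generic freeness for finitely generated $D$-modules.} Let $M$ be a finitely generated $D$-module. Choose a good filtration $F_\bullet M$, so that $\gr^F M$ is a finitely generated graded module over the finitely generated graded $A$-algebra $A[x,\xi]$. By the graded form of Grothendieck's generic freeness (Hochster--Roberts), there is $a\neq0$ such that every graded component $F_kM/F_{k-1}M\otimes_AA_a$ is a free $A_a$-module. Then $M\otimes_AA_a$ is an increasing union of submodules $F_kM\otimes A_a$ whose successive quotients are free. Each inclusion therefore splits, and $M\otimes_AA_a$ is free. Applying this to the finitely many modules $\HH^i_I(R)$, for $0\le i\le r$ (they vanish for $i>r$), and multiplying the resulting elements together with the one from Step 1 gives the required single $a$.

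\emph{Main obstacle.} I expect the delicate part to be Step 1: making the Bernstein--Sato relation uniform over $A_a$, including the invertibility of $b(-k)$, which is exactly where characteristic zero enters. Step 2 needs one further check: that the graded generic freeness theorem gives freeness of \emph{all} infinitely many graded pieces of $\gr^F M$ at once, rather than only freeness of $\gr^F M$ as a whole. If only the latter were available, each graded piece would still be a direct summand of a free module, hence projective. In that case I would instead pass to the finitely generated Rees module $\bigoplus_k F_kM$ over the Rees ring of $D$ and apply generic freeness there.
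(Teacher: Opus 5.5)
The paper does not prove this statement. It is quoted from \cite[Theorem~B]{CRS} and used as a black box in \autoref{sect_flatness}, so there is no internal argument to compare yours with. Judged on its own, your route through relative $D$-modules is correct. It has the advantage of being self-contained and of showing exactly where characteristic zero enters. Bernstein's functional equation over $\Frac A$, together with Kashiwara's rationality theorem, puts $b$ in $\QQ[s]$. (Kashiwara's theorem transfers from $\CC$ because the $b$-function is defined over the finitely generated field of coefficients of $f$ and is unchanged by field extension.) Hence $b(-k)\in\QQ^\times\subseteq A_a^\times$ for all $k\gg0$ after a single inversion. Over $\ZZ$ the values $b(-k)$ would acquire infinitely many prime factors, and this uniform spreading-out would break down. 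One wording slip: a term of the \v{C}ech complex is a finite direct sum of cyclic $D$-modules, not cyclic itself. This does not affect the Noetherian argument.

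Your worry in Step~2 is unnecessary, and the fallback you propose would not work as stated. The Rees ring $\bigoplus_k F_kD\,t^k$ is the homogenized Weyl algebra, which is noncommutative, so commutative generic freeness does not apply to the Rees module. What you need is the graded form of generic freeness, the same kind the paper relies on in the proof of \autoref{thm_9points}. In your setting it can be checked directly, because $\gr D=A[x,\xi]$ is a polynomial ring with homogeneous variables:
\begin{enumerate}[\quad\rm(i)]
\item Write $\gr^FM=F/N$ with $F$ graded free.
\item After inverting one element, the finite reduced Gr\"obner basis of $N\otimes_A\Frac A$ consists of monic elements of $N_a$ with coefficients in $A_a$.
\item The standard monomials then form a homogeneous $A_a$-basis of $F_a/N_a$. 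They span by the division algorithm. They are independent, since a nonzero $A_a$-combination of them lying in $N_a$ would be an element of $N\otimes_A\Frac A$ with a standard leading monomial.
\end{enumerate}
Thus every $F_kM/F_{k-1}M\otimes_AA_a$ is free, and your splitting argument completes the proof.
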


\begin{remark}
When $\HH_I^i(R)\otimes_A A_a$ is $A_a$-flat for each $i \ge 0$, one has base change isomorphisms
\[
\CD
\HH_I^i(R)\otimes_A B @>\cong>> \HH_I^i(R\otimes_A B)
\endCD
\]
for each $i \ge 0$, where $B$ is an arbitrary $A_a$-algebra.
\end{remark}

The next proposition provides a crucial stabilization argument; it allows us to exploit the generic flatness of local cohomology, \autoref{thm_CRS}, to study Ext modules:

\begin{proposition}
\label{prop_inject}
Let $I \subset R$ be a homogeneous ideal and $X=V(I)\subset\PP_A^n$ be the closed subscheme defined by $I$. Fix an integer $j\in\ZZ$. Assume that: 

\begin{enumerate}[\quad\rm(a)]
\item The natural projection $X\subset\PP_A^n\to\Spec A$ is a smooth morphism of relative dimension $d\ge 0$,
\item $I^t/I^{t+1}$ is $A$-flat for each $t\ge 0$,
\item $\HH_I^i(R)$ is $A$-flat for each $i\ge 0$.
\end{enumerate}
Then there is an integer $t_0>0$, such that the natural map 
\[
\CD
\psi_{B,t}^i\colon {\Ext_{R_B}^i\left(R_B/I^tR_B,\, R_B\right)}_{-j-n-1} @>>> {\Ext_{R_B}^i\left(R_B/I^{t+1}R_B,\, R_B\right)}_{-j-n-1}
\endCD
\]
is injective for each $t\ge t_0$ and $i\ge 0$, where $B$ is an arbitrary $A$-algebra, and $R_B\colonequals R\otimes_A B$.
\end{proposition}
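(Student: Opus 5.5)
Consider the short exact sequence $0\to I^t/I^{t+1}\to R/I^{t+1}\to R/I^t\to 0$. It is $A$-flat by (b), so it stays exact after $-\otimes_A B$, giving $0\to I^tR_B/I^{t+1}R_B\to R_B/I^{t+1}R_B\to R_B/I^tR_B\to 0$. (Flatness of each graded piece $I^k/I^{k+1}$, plus induction, makes every $R/I^t$ flat; then $\mathrm{Tor}_1^A(R/I^t,B)=0$ gives exactness, and $I^tR_B/I^{t+1}R_B\cong (I^t/I^{t+1})\otimes_A B$.) The long exact sequence of $\Ext_{R_B}(-,R_B)$ in degree $-j-n-1$ shows that $\ker\psi^i_{B,t}$ is the image of the connecting map from $\Ext^{i-1}_{R_B}(I^tR_B/I^{t+1}R_B,R_B)_{-j-n-1}$. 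So injectivity for all $i$ follows once we know
\[
{\Ext_{R_B}^{k}\big((I^t/I^{t+1})\otimes_A B,\, R_B\big)}_{-j-n-1}=0 \quad\text{for } k\neq n-d,
\]
together with control of the single surviving index $k=n-d$.

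For the vanishing, apply \autoref{cor_vanish_ext} to $M=(I^t/I^{t+1})(j)$, shifted so that the relevant degree is $-n-1$, modulo the low-degree truncation needed for hypothesis (b) there. We have $\widetilde{M}=\Sym^t(\scrI_X/\scrI_X^2)(j)$, since $X$ is smooth and hence locally a complete intersection. On fibers, flatness gives $(I^t/I^{t+1})(\frakp)=I_\frakp^t/I_\frakp^{t+1}$ up to saturation. \autoref{thm_uniform_vanish}, with $\calF=\calO_X(j)$, supplies a single $t_0$ such that $\HH^i$ of these sheaves vanishes on every fiber for $i\ne d$ and $t\ge t_0$. \autoref{cor_vanish_ext} then gives $\Ext^{n-i}_R(M,R)_{-n-1}=0$ for $i\neq d$, flatness at $i=d$, and base change to every $B$. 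Hence the required Ext groups over $R_B$ vanish for $k\neq n-d$.

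The main obstacle is the index $i=n-d+1$. There the kernel of $\psi^{n-d+1}_{B,t}$ is the image of the possibly nonzero module $E_t\otimes_A B$, where $E_t:={\Ext^{n-d}_R(I^t/I^{t+1},R)}_{-j-n-1}$; vanishing alone does not settle this case. This is where hypothesis (c) enters. I expect it to be handled in the following steps.

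\begin{enumerate}[\quad(1)]
\item The connecting map factors as $E_t\otimes B\to \Ext^{n-d}(R_B/I^{t+1}R_B,R_B)$, and the kernel we must rule out is the cokernel of $\Ext^{n-d}(R_B/I^{t+1}R_B)\to \Ext^{n-d}(I^tR_B/I^{t+1}R_B)$.
\item I would show this restriction map is surjective for $t\gg0$. All the relevant graded pieces of the Ext modules of $R/I^t$ are finitely generated and, by the vanishing above, eventually flat and base-changing.
\item I would then use that the direct limit of these pieces is $\HH^i_I(R)_{-j-n-1}$, which is flat by (c) and compatible with base change.
\item If the maps into $\Ext^{n-d+1}$ had kernels for infinitely many $t$, I would try to show this would produce torsion in a fiber of the limit. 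Concretely: once injectivity holds at all $i\neq n-d+1$, the ranks of the pieces are monotone in $t$ on each fiber and bounded by the rank of the limit over the fraction field.
\item I would compare the generic fiber with a closed fiber, using (c) and base change for local cohomology, to force stabilization uniformly.
\end{enumerate}

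Making this uniform in $B$, by reducing to $B=\kappa(\frakp)$ through flatness and Nakayama, is the delicate point.
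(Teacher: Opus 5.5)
Your handling of every index $i\neq c+1$, where $c=n-d$ and $k=-j-n-1$, matches the paper's. You use the Ext sequence of $0\to I^t/I^{t+1}\to R/I^{t+1}\to R/I^t\to 0$ together with the uniform vanishing of ${\Ext^{\ell}_{R_B}(I^tR_B/I^{t+1}R_B,R_B)}_k$ for $\ell\neq c$. At $i=c+1$, however, you list hopes rather than give an argument, and the ingredients you cite do not deliver them. The vanishing for $I^t/I^{t+1}$ says nothing by itself about flatness or base change of ${\Ext^i_R(R/I^t,R)}_k$. Fiberwise rank monotonicity would at best give stabilization on each fiber with an index depending on $\frakp$, and you offer no mechanism to make it uniform or to pass to an arbitrary $A$-algebra $B$. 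So the crucial case is not proved.

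The missing observation is that the same vanishing, at $\ell=c+1$, makes $\psi^{c+1}_{A,t}$ surjective for every $t\ge t_0$. Hence the kernels of the surjections ${\Ext^{c+1}_R(R/I^{t_0},R)}_k\to{\Ext^{c+1}_R(R/I^t,R)}_k$ form an ascending chain in a finitely generated $A$-module, and this chain stabilizes. Enlarging $t_0$ once, over $A$, therefore makes $\psi^{c+1}_{A,t}$ an isomorphism for all $t\ge t_0$; hypothesis (c) is not used for this. It follows that ${\Ext^i_R(R/I^t,R)}_k\cong{\HH^i_I(R)}_k$ for all $i\ge c+1$ and $t\ge t_0$.

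This is where (c) enters: these modules are flat in every degree $i\ge c+1$. The exchange property for local Ext's (Altman--Kleiman), applied by descending induction on $i$ from the top, then shows that ${\Ext^{c+1}_R(R/I^t,R)}_k$ commutes with base change to any $B$ after localizing at each $\frakp\in\Spec A$. Thus $\psi^{c+1}_{B,t}$ is locally $\psi^{c+1}_{A,t}\otimes_A B$, hence an isomorphism, with a $t_0$ independent of $B$.

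Flatness of $\Ext^{c+1}$ alone plus Nakayama would not give this base change. To start the descending induction you also need local freeness of all the higher $\Ext^i$, and that is exactly what the identification with ${\HH^i_I(R)}_k$ supplies.
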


\begin{proof}
Set $k\colonequals -j-n-1$, and let $B$ be an $A$-algebra. Taking $\calF = \calO_X(j)$ in \autoref{thm_uniform_vanish}, there exists an integer $t_0>0$ such that 
\[
\HH^i\Big(X_\frakp,\, \Sym^t\big(\scrI_{X_\frakp}/\scrI_{X_\frakp}^2\big)(j)\Big)\ =\ 0
\]
for all $\frakp\in\Spec A$, $t\ge t_0$, and $i\neq d$. After possibly increasing $t_0$, we may assume that $\left[I^t/I^{t+1}\right]_j=0$ for all $t\ge t_0$. Applying \autoref{cor_vanish_ext} to $M=\left(I^t/I^{t+1}\right)(j)$ and $\widetilde{M}\cong\Sym^t(\scrI_X/\scrI_{X}^2)(j)$ yields
\[
{\Ext_{R_B}^{n-i}\left(I^tR_B/I^{t+1}R_B,\, R_B\right)}_k\ =\ 0
\]
for all $t\ge t_0$ and $i\neq d$.

For $t\ge t_0$, consider the exact sequence 
\[
\CD
0 @>>> I^tR_B/I^{t+1}R_B @>>> R_B/I^{t+1}R_B @>>> R_B/I^tR_B @>>> 0.
\endCD
\]
Setting $c \colonequals n-d$, the associated exact sequence of Ext modules gives: 
\begin{alignat*}3
\psi_{B,t}^c \colon & {\Ext_{R_B}^{c}(R_B/I^tR_B,\, R_B)}_k & \to\ & {\Ext_{R_B}^c(R_B/I^{t+1}R_B,\, R_B)}_k & \quad & \text{is injective;}\\
\psi_{B,t}^{c+1} \colon & {\Ext_{R_B}^{c+1}(R_B/I^tR_B,\, R_B)}_k & \to\ & {\Ext_{R_B}^{c+1}(R_B/I^{t+1}R_B,\, R_B)}_k & \quad & \text{is surjective; and}\\
\psi_{B,t}^i \colon & {\Ext_{R_B}^i(R_B/I^tR_B,\, R_B)}_k & \to\ & {\Ext_{R_B}^i(R_B/I^{t+1}R_B,\, R_B)}_k & \quad & \text{is an isomorphism for $i \neq c, c+1$}.
\end{alignat*}
It only remains to deal with the case $i=c+1$. For each $t\ge t_0$, we have an exact sequence 
\[
\CD
0 @>>> K_t @>>> {\Ext_R^{c+1}(R/I^{t_0},\, R)}_k @>>> {\Ext_R^{c+1}(R/I^t,\, R)}_k @>>> 0,
\endCD
\]
with $K_t$ denoting the kernel. By the Noetherian hypothesis, the ascending sequence 
\[
K_{t_0}\, \subseteq\, K_{t_0+1}\, \subseteq\, K_{t_0+2}\, \subseteq\, \dots
\]
inside ${\Ext_R^{c+1}(R/I^{t_0},\, R)}_k$ must be eventually constant. Thus, after possibly increasing $t_0$, we may assume that $\psi_{A,t}^{c+1}$ is an isomorphism for all $t\ge t_0$. Hence we may assume that 
\[
{\Ext_R^i(R/I^t,\, R)}_k\ \cong\ {\HH_I^i(R)}_k \quad \text{ for all $i \ge c+1$ and $t \ge t_0$;}
\]
in particular, by assumption, these modules are locally free over $A$.

Fix $\frakp\in\Spec A$ and $t\ge t_0$, and write $R_\frakp=R\otimes_A A_\frakp$ and $B_\frakp=B \otimes_A A_\frakp$. Consider the natural base change map 
\[
\CD
b^i(\frakp) \colon {\Ext_{R_\frakp}^i\left(R_\frakp/I^tR_\frakp,\, R_\frakp\right)}_k \otimes_{A_\frakp} \kappa(\frakp) @>>> {\Ext_{R(\frakp)}^i\left(R(\frakp)/I^tR(\frakp),\, R(\frakp)\right)}_k.
\endCD
\]
The property of exchange for local Ext’s, \cite[Theorem~1.9]{ALTMAN_KLEIMAN_COMPACT_PICARD}, \cite[Theorem~A.5]{LONSTED_KLEIMAN}, implies that, if~$b^i(\frakp)$ is surjective, then:

(a) $b^i(\frakp)$ is an isomorphism;

(b) the base change map below is also an isomorphism
\[
\CD
{\Ext_{R_\frakp}^i\left(R_\frakp/I^t R_\frakp,\, R_\frakp\right)}_k \otimes_{A_\frakp} B_\frakp @>>> {\Ext_{R_B}^i \left(R_B/I^t R_B,\, R_B\right)}_k \otimes_A A_\frakp;
\endCD
\]

(c) $b^{i-1}(\frakp)$ is surjective if and only if ${\Ext_{R_\frakp}^i\left(R_\frakp/I^t R_\frakp,\, R_\frakp\right)}_k$ is $A_\frakp$-free.

Since we know that ${\Ext_{R_\frakp}^i\left(R_\frakp/I^t R_\frakp,\, R_\frakp\right)}_k$ is $A_\frakp$-free for all $i \ge c+1$, by descending induction on $i$ (starting with $i = n+1$), we get the isomorphism
\[
\CD
{\Ext_{R_\frakp}^{c+1}\left(R_\frakp/I^t R_\frakp,\, R_\frakp\right)}_k \otimes_{A_\frakp} B_\frakp @>\cong>> {\Ext_{R_B}^{c+1}\left(R_B/I^t R_B,\, R_B\right)}_k \otimes_A A_\frakp.
\endCD
\]
Therefore, we obtain the isomorphisms 
\begin{align*}
{\Ext_{R_B}^{c+1}\left(R_B/I^t R_B,\, R_B\right)}_k \otimes_A A_\frakp
&\ \cong\ {\Ext_{R_\frakp}^{c+1}\left(R_\frakp/I^t R_\frakp,\, R_\frakp\right)}_k \otimes_{A_\frakp} B_\frakp \\
&\ \cong\ {\HH_I^{c+1}\left(R_\frakp\right)}_k \otimes_{A_\frakp} B_\frakp \\ 
&\ \cong\ {\Ext_{R_\frakp}^{c+1}\left(R_\frakp/I^{t+1}R_\frakp,\, R_\frakp\right)}_k \otimes_{A_\frakp} B_\frakp \\
&\ \cong\ {\Ext_{R_B}^{c+1}\left(R_B/I^{t+1}R_B,\, R_B\right)}_k \otimes_A A_\frakp
\end{align*}
for all $t \ge t_0$. Finally, as $\frakp \in \Spec A$ was arbitrary, we get the isomorphism 
\[
\CD
{\Ext_{R_B}^{c+1}(R_B/I^tR_B,\, R_B)}_k @>\cong>> {\Ext_{R_B}^{c+1}(R_B/I^{t+1}R_B,\, R_B)}_k
\endCD
\]
for all $t \ge t_0$. This concludes the prove of the proposition.
\end{proof}

The following lemma yields a flatness criterion, assuming \autoref{thm_CRS} and \autoref{prop_inject}.

\begin{lemma}[Valuative criterion for the flatness of cohomology]
\label{prop_valuative}
Let $R= A[x_0,\dots,x_n]$ be a standard graded polynomial ring over a Noetherian domain $A$, and $I\subset R$ be a homogeneous ideal. Fix $j \in \ZZ$ and~$t\ge 1$. Assume that:
\begin{enumerate}[\quad\rm(a)]
\item For each $A$-algebra $V$ that is a discrete valuation ring, and for each $i\ge 0$, the natural map 
\[
\CD
{\Ext_{R_V}^i\left(R_V/I^tR_V,\, R_V\right)}_{-j-n-1} @>>> {\HH_I^i(R_V)}_{-j-n-1}
\endCD
\]
is injective, where $R_V\colonequals R\otimes_A V$,
\item $R/I^t$ is $A$-flat,
\item $\HH_I^i(R)$ is $A$-flat for each $i\ge 0$.
\end{enumerate}
Then ${\HH_\frakM^i(R/I^t)}_j$ is $A$-flat for each $i \ge 0$.
\end{lemma}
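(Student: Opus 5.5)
By \autoref{prop_fib_full_mod}, applied to $M=R/I^t$ (which is $A$-flat by (b)), it suffices to prove that $E^i\colonequals{\Ext_R^i(R/I^t,\, R)}_k$ is $A$-flat for every $i\ge 0$, where $k\colonequals -j-n-1$. Each $E^i$ is a finitely generated $A$-module, because the graded components of a finitely generated graded $R$-module are finitely generated over $A$. Over a Noetherian domain, a finitely generated module is flat if and only if its fiber rank $\frakp\mapsto\rank_{\kappa(\frakp)}E^i\otimes_A\kappa(\frakp)$ is constant. I would work instead with the equivalent condition (d) of \autoref{prop_fib_full_mod}: the function $\frakp\mapsto\rank_{\kappa(\frakp)}{\Ext^i_{R(\frakp)}(R(\frakp)/I^tR(\frakp),\,R(\frakp))}_k$ is constant on $\Spec A$ for each $i$.

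To prove constancy I would use a valuative argument. Given a specialization $\frakq\subset\frakp$ in $\Spec A$, choose a discrete valuation ring $V$ with an $A$-algebra structure whose generic point maps to $\frakq$ and whose closed point maps to $\frakp$. Such a $V$ exists by the standard construction for Noetherian rings: pass to $A/\frakq$, localize at $\frakp$, and take a DVR dominating it, possibly after a residue field extension. Since field extensions do not change these ranks, it suffices to show that over $V$ the generic and special ranks agree, i.e.\ that ${\Ext^i_{R_V}(R_V/I^tR_V,\,R_V)}_k$ is $V$-flat for all $i$, and that its formation commutes with passage to both fibers. Then connectedness of $\Spec A$ ($A$ is a domain, so the generic point specializes to every point) gives constancy.

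Over $V$ the argument goes as follows. By (c) and the Remark following \autoref{thm_CRS}, $\HH^i_I(R_V)\cong\HH^i_I(R)\otimes_AV$ is $V$-flat, i.e.\ torsion-free. By (a), ${\Ext^i_{R_V}(R_V/I^tR_V,R_V)}_k$ injects into ${\HH^i_I(R_V)}_k$, so it is a finitely generated torsion-free $V$-module, hence free. So every $E^i_V$ is $V$-free. To conclude that the fiber ranks agree, I would apply the exchange property for local Ext's, as in the proof of \autoref{prop_inject}. The module $R_V/I^tR_V$ is $V$-flat by (b). Use descending induction on $i$, starting from $i=n+2$ where everything vanishes. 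If the base change map $b^i$ to the closed fiber is surjective, then it is an isomorphism. Since $E^i_V$ is free, $b^{i-1}$ is then also surjective. Hence all base change maps to the special fiber are isomorphisms, and so are those to the generic fiber, since that is a localization. Therefore the rank at the closed point equals $\rank_V E^i_V$, which equals the rank at the generic point.

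The main obstacle is this last step: justifying the exchange property over $V$ in graded form, and checking that the fiber over the closed point of $V$ really computes the ranks at $\frakp$, up to a field extension. The field extension step is harmless because Ext of finitely generated modules over a polynomial ring commutes with flat base change. Granting this, condition (d) holds, and \autoref{prop_fib_full_mod} then yields the flatness of ${\HH^i_\frakM(R/I^t)}_j$.
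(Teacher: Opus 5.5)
Your proposal is correct and follows essentially the paper's proof. Both arguments take a DVR $V$ dominating $A_\frakp$ with fraction field $\Frac A$; this is your case of the generic point specializing to $\frakp$, which is all you need. Both use (a) and (c) to make ${\Ext^i_{R_V}(R_V/I^tR_V,\,R_V)}_k$ torsion-free and hence free, and both use invariance of fiber ranks under field extension. The step you flag as the main obstacle is handled in the paper by applying \autoref{prop_fib_full_mod} to $R_V/I^tR_V$ over the domain $V$; that proposition packages the exchange-property descending induction you describe.
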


\begin{proof}
Set $k\colonequals -j-n-1$. By \autoref{prop_fib_full_mod}, it suffices to show that 
\[
\rank_{\kappa(\frakp)} {\Ext_{R(\frakp)}^i\left(R(\frakp)/I^tR(\frakp),\, R(\frakp)\right)}_k
\ =\
\rank_Q {\Ext_R(R/I^t,\, R)}_k \otimes_A Q
\]
for all $\frakp\in\Spec A$, where $Q\colonequals\Frac A$.

Fix $\frakp\in\Spec A$. By \cite[Exercise~II.4.11]{HARTSHORNE} or \cite[Proposition~7.1.7]{EGAII}, there exists a discrete valuation ring $(V,\frakn)$ of $Q$ that dominates $A_\frakp$, i.e., $A_\frakp \subset V$ and $\frakp A_\frakp = \frakn\cap A_\frakp$. To simplify notation, set $\kappa(\frakn)\colonequals V/\frakn$ and $R_V(\frakn) \colonequals R_V \otimes_V \kappa(\frakn) \cong R_V/\frakn R_V$. Then $\HH_I^i(R_V) \cong \HH_I^i(R) \otimes_A V$ is flat over $V$, equivalently torsionfree over $V$, for each $i\ge 0$.
Therefore the injective map 
\[
{\Ext_{R_V}^i\left(R_V/I^tR_V,\, R_V\right)}_k\ \into\ {\HH_I^i(R_V)}_k
\]
from (a) implies that each ${\Ext_{R_V}^i\left(R_V/I^tR_V,\, R_V\right)}_k$ is also torsionfree over $V$, hence flat over $V$. By applying \autoref{prop_fib_full_mod} to $R_V/I^tR_V$ over $V$, we obtain 
\begin{align*}
\rank_{\kappa(\frakn)} {\Ext_{R_V(\frakn)}^i\left(R_V(\frakn)/I^tR_V(\frakn),\, R_V(\frakn)\right)}_k
& \ =\ \rank_{\kappa(\frakn)} {\Ext_{R_V}^i(R_V/I^tR_V,\, R_V)}_k \otimes_V \kappa(\frakn)\\
& \ =\ \rank_Q {\Ext_R^i(R/I^t,\, R)}_k \otimes_A Q.
\end{align*}
Considering the field extension $\kappa(\frakp) \into \kappa(\frakn)$, we have isomorphisms
\[
R_V(\frakn)\ \cong\ (R\otimes_A V)\otimes_V\kappa(\frakn)
\ \cong\ R\otimes_A\kappa(\frakn)
\ \cong\ (R\otimes_A\kappa(\frakp))\otimes_{\kappa(\frakp)}\kappa(\frakn)
\ \cong\ R(\frakp)\otimes_{\kappa(\frakp)} \kappa(\frakn).
\]
Thus, we have the isomorphism 
\[
\Ext_{R_V(\frakn)}^i\big(R_V(\frakn)/I^tR_V(\frakn),\, R_V(\frakn)\big)\ \cong\
\Ext_{R(\frakp)}^i\big(R(\frakp)/I^tR(\frakp),\, R(\frakp)\big) \otimes_{\kappa(\frakp)}\kappa(\frakn).
\]
Combining everything, we obtain 
\[
\rank_{\kappa(\frakp)} {\Ext_{R(\frakp)}^i\left(R(\frakp)/I^tR(\frakp),\, R(\frakp)\right)}_k\ =\
\rank_Q {\Ext_R(R/I^t,\, R)}_k \otimes_A Q,
\]
which completes the proof of the lemma.
\end{proof}

We are now ready to prove the first main result:

\begin{proof}[Proof of \autoref{thm_A}]
Let $I \subset R$ be a homogeneous ideal defining $X$ in $\PP_A^n$. By generic freeness, for example, \cite[\S24]{MATSUMURA}, there exists a nonzero element $a'$ in $A$ such that 
\[
\gr_I(R) \otimes_A A_{a'}\ =\ \bigoplus_{t\ge 0}\, I^t/I^{t+1} \otimes_A A_{a'}
\] 
is $A_{a'}$-free. By \autoref{thm_CRS}, there exists a nonzero element $a''$ in $A$ such that $\HH_I^i(R) \otimes_A A_{a''}$ is $A_{a''}$-free for each $i \ge 0$. To simplify notation, substitute $A$ by the localization $A_{a'a''}$. Set $k = -j-n-1$. Then \autoref{prop_inject} yields an integer $t_0 > 0$ such that the natural map
\[
\CD
{\Ext_{R_V}^i\left(R_V/I^tR_V,\, R_V\right)}_k @>>> {\HH_I^i\left(R_V\right)}_k
\endCD
\]
is injective for all $t \ge t_0$ and $i \ge 0$ and for any $A$-algebra $V$ that is a discrete valuation ring. From \autoref{prop_valuative}, we get that ${\HH_\frakM^i\left(R/I^t\right)}_j$ is $A$-flat for all $i \ge 0$ and $t \ge t_0$. After possibly increasing $t_0$, we may also assume that $\left[I^t/I^{t+1}\right]_j=0$ for all $t\ge t_0$. This implies that 
\[
\HH^i\big(X_t,\, \calO_{X_t} \otimes_{\calO_{\PP_A^n}}\calF\big)
\]
is $A$-flat for all $i \ge 0$ and $t \ge t_0$, cf.~proof of \autoref{cor_vanish_ext}. Finally, by generic freeness, there exists a nonzero element $a \in A$ such that
\[
\HH^i\big(X_t,\, \calO_{X_t} \otimes_{\calO_{\PP_A^n}}\calF\big) \otimes_A A_a
\]
is $A_a$-free for all $1 \le t < t_0$ and $i \ge 0$. This concludes the proof of the theorem.
\end{proof}

\section{Ideals of points}
\label{sect_points}

Towards investigating \autoref{question_constant} via the cohomology of thickenings, consider the following setup. Let $\ba\colonequals\left\{a_{ij} \mid 1\le i\le m,\ 0\le j \le n\right\}$ be indeterminates over a field $\kk$. Set $A\colonequals\kk[\ba]$, and let $R$ be the polynomial ring $A[x_0,\dots,x_n]$. Consider the ideal $I \colonequals \frakA_1 \cap \dots \cap \frakA_m$ of $R$, where
\[
\frakA_i \colonequals I_2 \begin{pmatrix}
x_0 & x_1 & \cdots & x_n\\
a_{i0} & a_{i1} & \cdots & a_{in}
\end{pmatrix}
\quad \text{ for } 1\le i\le m,
\] 
i.e., $\frakA_i$ is the ideal generated by size $2$ minors, and $I$ is the defining ideal of a \emph{generic} set of $m$ points in projective space. Set $\Rees(I)$ to be the Rees algebra $\bigoplus_{t\ge 0} I^tT^t \subset R[T]$. If there exists a nonzero element $a \in A$ such that 
\[
\HH^i_{(x_0,\dots,x_n)}\left(\Rees(I)\right)\otimes_A A_a
\]
is a free $A_a$-module for each $i\ge 0$, then the answer to \autoref{question_constant} is affirmative. In the case that $m\le n+2$, the answer to \autoref{question_constant} is readily seen to be affirmative, since $\PGL_{n+1}(\kk)$ acts transitively on $(n+2)$-tuples of points in $\PP_\kk^n$ that are in general linear position. We note that the stronger statement, i.e., the generic freeness of the relevant local cohomology modules, also holds in this case:

\begin{theorem}
\label{thm_points}
Let $\kk$ be an arbitrary field. Fix integers $m\le n+2$, and consider the rings $A$, $R$, and $\Rees(I)$, as defined earlier in this section. Then there exists a nonzero element $a \in A$ such that 
\[
\HH^i_{(x_0,\dots,x_n)}\left(\Rees(I)\right)\otimes_A A_a
\]
is a free $A_a$-module for each $i\ge 0$.
\end{theorem}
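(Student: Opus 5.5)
The approach is to use the transitive action of $\PGL_{n+1}$ to reduce, after inverting a suitable nonzero element of $A$, to a \emph{constant} family, that is, one obtained by base change from a fixed configuration over $\kk$. Flatness of local cohomology is then automatic, because local cohomology commutes with flat base change.

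Concretely, let $a\in A$ be the product of all maximal minors of the $m\times(n+1)$ matrix $(a_{ij})$ when $m\le n+1$. When $m=n+2$, take the product of all $(n+1)\times(n+1)$ minors. Over $A_a$ the points $\bp_1,\dots,\bp_m$ are in general linear position.

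\textbf{Case $m\le n+1$.} Choose $g\in \mathrm{GL}_{n+1}(A_a)$ sending $\bp_i$ to the coordinate point $e_{i-1}$ for each $i\le m$. Extend the first $m$ rows to an invertible matrix by adjoining standard basis vectors; this is possible after inverting a suitable minor, so we enlarge $a$ accordingly.

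\textbf{Case $m=n+2$.} Write $\bp_{n+2}=\sum_i\lambda_i\bp_i$, where each $\lambda_i$ is a ratio of minors and hence a unit in $A_a$. Rescaling the frame lets $g$ send $\bp_1,\dots,\bp_{n+2}$ to $e_0,\dots,e_n,e_0+\dots+e_n$.

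In both cases $g$ induces a graded $A_a$-algebra automorphism $\sigma$ of $R_a=A_a[x_0,\dots,x_n]$, linear in the $x$'s. Under $\sigma$, each $\frakA_iR_a$ corresponds to the ideal of the standard point. The ideal $\frakA_iR_a$ is the kernel of the surjection $R_a\to A_a[s]$, $x_j\mapsto a_{ij}s$, after localizing so that some $a_{ij}$ is a unit; our $a$ already makes each row nonzero. Hence $\sigma(IR_a)=I_0R_a$, where $I_0\subset \kk[x_0,\dots,x_n]$ is the ideal of the standard $m$ points. Here one must check that intersection commutes with the flat extension $\kk\to A_a$, which holds because $\kk\to A_a$ is flat.

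It follows that $\Rees(I)\otimes_AA_a\cong\Rees(IR_a)\cong\Rees(I_0)\otimes_\kk A_a$. The first isomorphism holds because localization commutes with powers and Rees algebras. The second is induced by $\sigma$, which preserves the ideal $(x_0,\dots,x_n)$ since it is linear in the $x$'s; it also uses flatness of $A_a$ over $\kk$, giving $I_0^tA_a[x]=I_0^t\otimes_\kk A_a$. Therefore
\[
\HH^i_{(x_0,\dots,x_n)}(\Rees(I))\otimes_AA_a\ \cong\ \HH^i_{(x_0,\dots,x_n)}(\Rees(I_0))\otimes_\kk A_a,
\]
again by flat base change for local cohomology. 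The right-hand side is a free $A_a$-module, since any $\kk$-vector space tensored with $A_a$ is free.

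The main obstacle I expect is the bookkeeping needed to ensure that $\sigma$ is defined over $A_a$ itself and not merely fiberwise. This requires solving for the projective frame, in particular the scalars $\lambda_i$ in the $n+2$ case, using Cramer's rule with the inverted minors. One must also verify that $\sigma$ carries $\frakA_i$ exactly onto the ideal generated by the $2\times2$ minors for the standard point. The latter holds because the $2\times 2$ minors of $\begin{pmatrix} x\\ v\end{pmatrix}$ and of $\begin{pmatrix} xg^{-1}\\ vg^{-1}\end{pmatrix}$ generate the same ideal, and rescaling $v$ by a unit does not change that ideal.
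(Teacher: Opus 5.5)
Your proposal is correct and follows essentially the same route as the paper. The paper's column operations (inverting pivots, then dividing columns by the $b_i$) amount to your change of coordinates by an element of $\mathrm{GL}_{n+1}(A_a)$ that carries the points to the standard frame; after that, $IR_a$ is extended from an ideal of $\kk[y_0,\dots,y_n]$, and flat base change gives freeness, exactly as you argue (your remark identifying $\frakA_iR_a$ with the kernel of $x_j\mapsto a_{ij}s$ is superfluous, since invariance of $I_2$ under invertible column operations and unit row-scaling already suffices).
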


\begin{proof}
The ideals $\frakA_1,\dots\frakA_m$ are unchanged by column operations on the matrix
\[
\begin{pmatrix}
x_0 & x_1 & \cdots & x_n\\ 
a_{10} & a_{11} & \cdots & a_{1n}\\
a_{20} & a_{21} & \cdots & a_{2n}\\
\vdots & \vdots & & \vdots \\
a_{m0} & a_{m1} & \cdots & a_{mn}
\end{pmatrix}.
\]
Note that $\frakA_i$ is the ideal of size $2$ minors of the submatrix given by the first and $(i+1)$-th rows. After inverting the element $a_{10}\in A$, we may subtract multiples of the first column from the other columns, so that the other entries in that row are zero. Next invert the element diagonally below $a_{10}$, and subtract suitable multiples of the second column from the others, so as to obtain zeros elsewhere in that row; proceed in this manner. Let $a$ denote the product of the elements that have been inverted. For notational simplicity, assume for the moment that $m=n+2$; the resulting matrix then has the form 
\[
\begin{pmatrix}
y_0 & y_1 & y_2 &\cdots & y_n\\ 
a_1 & 0 & 0 & \cdots & 0\\
0 & a_2 & 0 & \cdots & 0\\
0 & 0 & a_3 & & 0\\
\vdots & \vdots & & \ddots \\
0 & 0 & 0 & & a_{n+1}\\
b_0 & b_1 & b_2 &\cdots & b_n
\end{pmatrix},
\]
where $a_i,b_i\in A_a$, and $y_0,\dots,y_n$ are independent linear forms in the indeterminates $x_0,\dots,x_n$. Specifically, the ideal $(x_0,\dots,x_n)R_a$ agrees with the ideal $(y_0,\dots,y_n)R_a$. After replacing the element $a$, we may also assume that the $b_i$ are units in $A_a$; dividing the columns by $b_0,\dots,b_n$ respectively, and renaming the elements $a_i$ and $y_i$, the matrix takes the form
\[
\begin{pmatrix}
y_0 & y_1 & y_2 &\cdots & y_n\\ 
a_1 & 0 & 0 & \cdots & 0\\
0 & a_2 & 0 & \cdots & 0\\
0 & 0 & a_3 & & 0\\
\vdots & \vdots & & \ddots \\
0 & 0 & 0 & & a_{n+1}\\
1 & 1 & 1 & \cdots & 1
\end{pmatrix}.
\]
Note that
\[
\frakA_iR_a\ =\ (y_0,\ \dots,\ \widehat{y}_{i-1},\ \dots,\ y_n)R_a\quad\text{ for }1\le i\le n+1,
\]
and that
\[
\frakA_{n+2}R_a\ =\ (y_1-y_0,\ y_2-y_0,\ \dots,\ y_n-y_0)R_a.
\]
In the case that $m<n+2$, the generators of the ideals $\frakA_1R_a,\dots,\frakA_mR_a$ remain as displayed above, while the other ideals $\frakA_iR_a$ do not occur. In either case, $\frakA_1R_a,\dots,\frakA_mR_a$ are extended from the polynomial ring $P \colonequals \kk[y_0,\dots,y_n]$; set $J$ to be the intersection of the corresponding ideals in $P$, in which case
\[
IR_a\ =\ \frakA_1R_a\cap\dots\cap\frakA_mR_a\ =\ JR_a.
\]
Set $\Rees(J)$ to be the Rees algebra $\bigoplus_{t \ge 0} J^tT^t \subset P[T]$. For each $i\ge 0$, the local cohomology module
\[
\HH^i_{(y_0,\dots,y_n)}\left(\Rees(J)\right)
\]
is, of course, free over the field $\kk$, but then
\[
\HH^i_{(y_0,\dots,y_n)}\left(\Rees(J)\right) \otimes_\kk A_a\ \cong\ \HH^i_{(x_0,\dots,x_n)}\left(\Rees(I)\right) \otimes_A A_a
\]
is a free $A_a$-module.
\end{proof}

In the next section, we prove that in the case of nine points, $\HH^2_{(x_0,x_1,x_2)}\left(\Rees(I)\right)$ is not generically free over the coefficient ring $A$.

\section{The erratic behavior of nine points in $\PP^2$}
\label{nine_points}

We show that \autoref{thm_points} cannot be extended to the relatively simple case of nine points in projective plane. Throughout this section, we work over the field $\CC$ of complex numbers. Consider the set of indeterminates $\ba\colonequals\left\{a_{ij} \mid 1\le i\le 9,\ 0\le j \le 2\right\}$ over $\CC$, and set $A\colonequals\CC[\ba]$. Let $R$ be the polynomial ring $A[x_0,x_1,x_2]$, and for $1\le i\le 9$, consider the ideals
\[
\frakA_i \colonequals I_2 \begin{pmatrix}
x_0 & x_1 & x_2 \\
a_{i0} & a_{i1} & a_{i2}
\end{pmatrix}.
\]
Set $I\colonequals\frakA_1\cap\dots\cap\frakA_9$, and let $\scrI\subset\calO_{\PP_A^2}$ be the corresponding ideal sheaf.
Let $\Rees(I) = \bigoplus_{t\ge 0} I^tT^t \subset R[T]$ and $\Rees(\scrI) = \bigoplus_{t\ge 0} \scrI^tT^t \subset \calO_{\PP_A^2}[T]$ be the respective Rees algebras. For integers $t \ge 1$, set
\[
M_t \colonequals \HH^1\big(\PP_A^2,\, \scrI^{t}(3t)\big).
\]
Our main result in this section is the following erratic behavior of nine point in projective plane:

\begin{theorem}
\label{thm_9points}
With the notation as above, the following hold: 
\begin{enumerate}[\quad\rm(a)]
\item There is no nonzero element $a \in A$ such that $\bigoplus_{t \ge 1} M_t \otimes_A A_a$ is a flat $A_a$-module.
\item The set of associated prime ideals
\[
\bigcup_{t \ge 1} \Ass_A\left(M_t\right)
\]
is an infinite set. 
As a consequence, each of the following sets of associated primes is infinite:
\[
\Ass_A\left(\HH_{(x_0,x_1,x_2)}^2\left(\Rees(I)\right)\right), \quad
\Ass_R\left(\HH_{(x_0,x_1,x_2)}^2\left(\Rees(I)\right)\right),
\text{\quad and \quad}
\Ass_{\Rees(I)}\left(\HH_{(x_0,x_1,x_2)}^2\left(\Rees(I)\right)\right).
\]
\end{enumerate}
\end{theorem}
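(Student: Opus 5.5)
We will use the geometry of the unique cubic through nine general points. For $\bp$ general in $(\PP^2_\CC)^9$, there is a unique cubic $C_\bp$ through $\bp_1,\dots,\bp_9$, and it is smooth. So $C_\bp$ is an elliptic curve, and $tC_\bp$ is a member of $\HH^0(\scrI_{X_\bp}^{(t)}(3t))$. Since $X_\bp$ is a set of reduced points, $\scrI^t$ agrees with $\scrI^{(t)}$ as sheaves. We compute
\[
h^0\big(\scrI_{X_\bp}^t(3t)\big)-h^1\big(\scrI_{X_\bp}^t(3t)\big)=\binom{3t+2}{2}-9\binom{t+1}{2}=1 .
\]
On the blowup $Y$ at the nine points, $-K_Y=3H-\sum E_i$ restricts on $C$ to the degree-zero line bundle $L_\bp=\calO_C(3)(-\sum \bp_i)$. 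The standard exact sequence
\[
0\to \calO_Y(-(t-1)K_Y)\to\calO_Y(-tK_Y)\to L_\bp^{\otimes t}\to 0
\]
then gives the following dichotomy. If $L_\bp$ is not torsion of order dividing $t$, then $h^0(\scrI^t(3t))=1$ and $h^1=0$. If the order of $L_\bp$ divides $t$, then $h^0=2$ and $h^1=1$ (an elliptic pencil of Halphen type). We will verify this by induction on $t$ using $h^1(\calO_Y)=0$ and $h^0(L^{t})=h^1(L^{t})\in\{0,1\}$.

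For part (a), suppose $\bigoplus_t M_t\otimes A_a$ were flat. Since cohomology of top degree commutes with base change, we have $M_t\otimes\kappa(\frakp)\cong \HH^1(\scrI_{X_\frakp}^t(3t))$, and the right-exactness of $\HH^2$ lets us run the semicontinuity argument downward. Flatness of $M_t$ would then force $\frakp\mapsto h^1(\scrI_{X_\frakp}^t(3t))$ to be constant on $D(a)$. Its generic value is $0$, since for the generic point $L$ is non-torsion, because torsion points are countable. So $M_t\otimes A_a=0$ for all $t$. However, for each $N$ the locus $Z_N=\{\bp: L_\bp^{\otimes N}\cong\calO_C\}$ is a nonempty closed subset of codimension one. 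Indeed, fixing $\bp_1,\dots,\bp_8$ and the cubic, the point $\bp_9$ is constrained to finitely many choices on $C$. The union of the $Z_N$ is Zariski dense, so some $Z_N$ meets $D(a)$. At such a point, flatness plus base change would give $h^1=1$, a contradiction. The main technical check is that $\scrI^t$ commutes with base change over $D(a)$ after shrinking $a$, so that $M_t(\frakp)$ computes the fiberwise $h^1$; I will use $\HH^2(\scrI^t(3t))=0$ fiberwise together with generic flatness of $\gr_I(R)$.

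For part (b), the argument is as follows. $M_N$ is nonzero at points of $Z_N$ and vanishes generically, so $M_N$ is a nonzero torsion $A$-module. We will show that its support contains the divisor $\overline{Z_N}$ of exact order-$N$ torsion. The plan is to localize at the height-one prime $\frakp_N$ of the component of exact order $N$: over the DVR $A_{\frakp_N}$, $M_N$ is nonzero at the closed fiber and torsion, so $\frakp_N\in\Ass_A(M_N)$. These primes are distinct for distinct $N$, which proves that $\bigcup_t\Ass_A(M_t)$ is infinite. Finally, $\HH^2_{\frakM}(\Rees(I))=\bigoplus_t \HH^2_\frakM(I^t)$, and $\HH^2_\frakM(I^t)_{3t}\cong\HH^1(\PP^2_A,\scrI^t(3t))=M_t$, using that $\HH^1_\frakM(I^t)=0$ since $R/I^t$ has depth at least one. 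Hence $M_t$ is an $A$-direct summand of $\HH^2_\frakM(\Rees(I))$, and $\Ass_A$ of the local cohomology module is infinite. Contracting associated primes from $R$ or $\Rees(I)$ to $A$ lands in $\Ass_A$, and every prime of $\Ass_A$ arises this way. It follows that the other two sets are infinite as well.

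The expected obstacle is the dichotomy computation together with the irreducibility and codimension statement for $Z_N$ over the full parameter space. In particular, one must confirm that the exact-order-$N$ locus defines a genuine height-one prime and that $M_N$ is nonzero at it, not merely at closed points.
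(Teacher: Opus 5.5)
Your strategy works, but the ``dichotomy'' you plan to verify is false and must be replaced by a weaker statement. Suppose $L_\bp$ has exact order $d$; then $d\ge 2$ when the cubic is unique. As the paper proves, $h^1(\scrI_{X_\bp}^t(3t))=\lfloor t/d\rfloor$ and $h^0=1+\lfloor t/d\rfloor$. So $h^1\neq 0$ for every $t\ge d$, not only when $d\mid t$, and $h^1$ grows with $t$.

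Your own sequence already refutes the dichotomy at $t=d+1$. There $\HH^0(C,L_\bp^{\otimes t})=\HH^1(C,L_\bp^{\otimes t})=0$, so $h^1(\calO_Y(-(d+1)K_Y))=h^1(\calO_Y(-dK_Y))=1$. In general $h^1$ is nondecreasing in $t$. Its exact value requires surjectivity of $\HH^0(\scrI^t(3t))\to\HH^0(C,L_\bp^{\otimes t})$, which the paper gets from powers of a lift $F$ and which your induction never addresses.

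Fortunately the false part is not load-bearing. You only use two facts:
\begin{enumerate}
\item[(i)] $h^1(\scrI_{X_\bp}^t(3t))=0$ if $L_\bp^{\otimes k}\not\cong\calO_C$ for all $1\le k\le t$. This follows by induction from $h^1(\calO_Y)=0$ and $\HH^1(C,L_\bp^{\otimes k})=0$.
\item[(ii)] $h^1(\scrI_{X_\bp}^t(3t))\neq 0$ if $L_\bp^{\otimes t}\cong\calO_C$. Indeed $\HH^1(\calO_Y(-tK_Y))\to\HH^1(C,\calO_C)\cong\CC$ is onto, because $\HH^2(\calO_Y(-(t-1)K_Y))=0$.
\end{enumerate}
With (i) and (ii) in place of the dichotomy, your arguments for (a) and (b) go through. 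The support of $M_N$ on the good locus is $\bigcup_{k\le N}Z_k$ rather than $Z_N$, but you only need that it contains $Z_N$ and is proper.

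Two smaller repairs. First, $\HH^2_\frakM(I^t)_{3t}\cong\HH^1(\PP^2_A,\scrI^t(3t))$ holds for any graded module. Drop the claim that $R/I^t$ has positive depth: it is false in general, since $I^t$ is usually not saturated (its saturation is the symbolic power). Second, when shrinking $a$, you also need $\scrI\otimes_A\kappa(\frakp)$ to be the ideal sheaf of the nine points of the fiber (the paper's condition (b) in Step 2). Generic flatness of $\gr_I(R)$ alone does not give this.

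With these repairs, your route genuinely differs from the paper's. The paper makes one contradiction hypothesis: $\bigcup_t\Ass(M_t\otimes_A A_a)$ is finite for some $a\neq 0$. This is implied both by flatness in (a), since associated primes then lie in $\{0\}$, and by finiteness in (b). The paper deduces that all supports lie in those of $M_1,\dots,M_{t_0}$, so vanishing of $h^1$ for $k\le t_0$ would propagate to all $t$. It contradicts this on $C^9$ for a single fixed smooth cubic $C$: the summation map to $\Pic^0(C)$ is dominant, hence hits points of exact order $>t_0$.

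You instead prove (a) directly: constant rank and generic rank $0$ force $M_t\otimes_A A_a=0$, which fails at a point of some $Z_N$ in $D(a)$. For (b) you exhibit specific associated primes, namely height-one primes $\frakp_N$ of components of the exact-order-$N$ loci, using that $A_{\frakp_N}$ is a DVR and $M_N$ is torsion with $\frakp_N\in\Supp M_N$. This is more informative: it identifies infinitely many associated primes, all of height one. The cost is global input about the loci $Z_N$ in the full parameter space (nonemptiness, codimension one, distinct components for distinct exact orders), which the paper's one-curve argument avoids. You should also note that only finitely many of the distinct $\frakp_N$ contain the element you invert for base change, and that this suffices. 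Your passage from $\Ass_A$ to $\Ass_R$ and $\Ass_{\Rees(I)}$ by contraction is correct, since these rings are Noetherian, and it makes explicit a step the paper leaves implicit.
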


\begin{proof}
The proof occupies the rest of the section; by way of contradiction, we assume the following hypothesis:

\begin{hypothesis}
\label{hyp}
Assume there exists a nonzero element $a \in A$ such that $\calP \colonequals \bigcup_{t \ge 1} \Ass_{A_a}\left(M_t \otimes_A A_a\right)$ is a finite set. 
\end{hypothesis}

The proof of \autoref{thm_9points} is divided into several steps. 

\smallskip
\noindent
{\sc Step 1:} \emph{Consider the support of each $M_t$.} For $t\ge 1$, we analyze the support 
\[
\Supp_{A_a}\left(M_t \otimes_A A_a\right)\ =\ \big\lbrace \frakp \in \Spec(A_a) \mid \left(M_t\right)_\frakp \neq 0 \big\rbrace\ =\ V\left(\Ass_{A_a}\left(M_t\otimes_A A_a\right)\right).
\]
Since $\calP$ is assumed to be a finite set, \autoref{hyp}, there exists an integer $t_0 \ge 1$ such that 
\begin{equation}
\label{eq_contain_supp}
\Supp_{A_a}\left(M_t \otimes_A A_a\right)\ \subseteq\ \Supp_{A_a}\left(M_1 \otimes_A A_a\right) \,\cup\, \cdots \,\cup\, \Supp_{A_a}\left(M_{t_0} \otimes_A A_a\right)
\end{equation}
for each $t \ge 1$.

\smallskip
\noindent
{\sc Step 2:} \emph{Find a dense open subset where we can specialize.} We regard a closed point $S \in \AA_\CC^{27} = \left(\AA_\CC^3\right)^9$ as a tuple of $9$ points in $\AA_\CC^3$. For each $S \in \AA_\CC^{27}$, consider the fiber $\PP_{\kappa(S)}^2 = \PP_A^2 \times_A \kappa(S)$ and the corresponding ideal sheaf $\scrI_S \colonequals \scrI \cdot \calO_{\PP_{\kappa(S)}^2}$, where $\frakp_S \colonequals I(S) \subset \maxSpec(A)$ and $\kappa(S) \colonequals A_{\frakp_S}/\frakp_SA_{\frakp_S}$ is the residue field. Since we consider closed points, we always have $\kappa(S) \cong \CC$. By generic freeness, we may choose a dense open subset $U \subset \AA_\CC^{27}$ such that the following conditions hold:

\begin{enumerate}[\rm (a)]
\item The quotient $\calO_{\PP_{A}^2}[T] / \Rees(\scrI)$ is flat over $U$; this may be achieved by the Hochster-Roberts generic freeness result, \cite[Lemma~8.1]{HOCHSTER_ROBERTS1}. It then follows that $\Rees(\scrI)= \bigoplus_{t \ge 1}\scrI^tT^t$ is flat over $U$, and that
\[
\scrI_S^t\ =\ \scrI^t \cdot \calO_{\PP_{\kappa(S)}^2}\ \cong\ \scrI^t \otimes_A \kappa(S)
\]
for all $S \in U$ and $t \ge 1$.

\item For each $S \in U$, we have $\scrI_S = \scrI \cdot \calO_{\PP_{\kappa(S)}^2} = \big( \widetilde{\frakA_1} \cdot \calO_{\PP_{\kappa(S)}^2}\big) \cap \cdots \cap \big( \widetilde{\frakA_9} \cdot \calO_{\PP_{\kappa(S)}^2}\big)$; this may be achieved by enforcing that all the sheaves in the exact sequence 
\[
\CD
0 @>>> \scrI @>>> \calO_{\PP_{A}^2} @>\psi>> \bigoplus_{i=1}^9 \calO_{\PP_{A}^2}/\widetilde{\frakA_i} @>>> {\rm Coker}(\psi) @>>> 0
\endCD
\]
are flat over $U$.

\item Consider the natural smooth morphism $\pi\colon \left(\AA_\CC^3\setminus \{0\}\right)^9 \to \left(\PP_{\CC}^2\right)^9$. We may assume $U \subset \left(\AA_\CC^3\setminus \{0\}\right)^9$, so $\pi(U)$ is a dense open subset of $\left(\PP_{\CC}^2\right)^9$.

\item By an abuse of notation, and by possibly restricting $U \subset \AA_\CC^{27}$ further, we may assume that each element $S \in \pi(U)$ represents a set of $9$ \emph{distinct} points in $\PP_\CC^2$. Thus, for all $t \ge 1$, $tS$ denotes the closed subscheme of $\PP_\CC^2$ determined by the ideal sheaf $\scrI_S^t \subset \calO_{\PP_\CC^2}$.

\item For each $S \in \pi(U)$, we may assume that there exists a unique cubic $C \subset \PP_{\CC}^2$ passing through the nine points of $S$, and that $C$ is smooth. For further related results, see \cite{NAGATA_CONJ, HARBOURNE}.

\item We have $U \subset D(a)$, where $a \in A$ is the nonzero element guaranteed by \autoref{hyp}.
\end{enumerate}
For the remainder of this proof, we fix a dense open subset $U \subset \AA_\CC^{27}$ that satisfies the above conditions. 

Next, note that using the exact sequence
\[
\CD
0 @>>> \scrI_S^t(3t) @>>> \calO_{\PP_\CC^2}(3t) @>>> \calO_{tS}(3t) @>>> 0,
\endCD
\]
we obtain
\begin{equation}
\label{eq_vanish_H2}
\HH^i\left(\PP_\CC^2,\, \scrI_S^t(3t)\right)\ =\ 0.
\end{equation}
for all $i \ge 2$, $S \in U$ and $t \ge 1$. 

By base change of cohomology, for example, \cite[Theorem~12.11]{HARTSHORNE}, we have
\begin{equation}
\label{eq_specialization}
M_t \otimes_A \kappa(S)\ =\ \HH^1\left(\PP_A^2,\, \scrI^{t}(3t)\right) \otimes_A \kappa(S)\
\cong\ \HH^1\left(\PP_{\CC}^2,\, \scrI_S^t(3t)\right)
\end{equation}
for all $S \in U$ and $t \ge 1$, where we use the vanishing in \autoref{eq_vanish_H2}. Using \autoref{eq_specialization} and Nakayama's lemma, it follows that
\[
\Supp_A(M_t) \cap U\ =\ \Big\lbrace S \in U \,\mid\, \HH^1\left(\PP_{\CC}^2,\, \scrI_S^t(3t)\right) \neq 0 \Big\rbrace.
\]
Consequently, \autoref{eq_contain_supp} yields the following useful lemma:

\begin{lemma}
\label{lem_vanish_t0}
Assume \autoref{hyp} and let $S \in U$. If $\HH^1\big(\PP_{\CC}^2,\, \scrI_S^k(3k)\big) = 0$ for all $1 \le k \le t_0$, then
\[
\HH^1\big(\PP_{\CC}^2,\, \scrI_S^t(3t)\big) = 0 \quad \text{ for all $t \ge 1$.}
\]
\end{lemma}

\smallskip
\noindent
{\sc Step 3:} \emph{Restrict to a smooth cubic.} Choose a smooth cubic $C \subset \PP_\CC^2$ that passes through a set $S$ of nine points in $\pi(U) \subset \left(\PP_{\CC}^2\right)^9$. Note that~$C$ is in fact an elliptic curve, i.e., that it has genus one. Consider the dense open subset
\[
V \colonequals C^9 \cap \pi(U)\ \subset\ C^9\ =\ C \times \cdots \times C,
\]
and the summation map $\Phi\colon V \to \Pic^0(C) \cong C$ given by 
\[
\Phi(S) \colonequals 3\left[H\right] - \sum_{i=1}^9 \left[P_i\right],
\]
where $S = (P_1,\dots,P_9) \in V$, and $[H]$ is the hyperplane class. Note that the map $\Phi$ is dominant. 

Given $S = (P_1,\dots,P_9)\in V$, consider the divisor $D_S \colonequals 3H - \sum_{i=1}^9P_i$ on $C$, and the corresponding line bundle $\scrL_S \colonequals \calO_C(D_S)$. The residual scheme $\Res_C(tS) \subset \PP_\CC^2$ of $tS = V(\scrI_S^t)$ with respect to $C$ is the closed subscheme determined by the ideal sheaf $\scrI_S^{t}:\scrI_C$ in $\calO_{\PP_\CC^2}$. We have the exact sequence 
\[
\CD
0 @>>> \scrI_{\Res_C(tS)} \otimes \calO_{\PP_\CC^2}(-C) @>>> \scrI_S^t @>>> \scrI_{(tS) \cap C, C} @>>> 0.
\endCD
\]
Since $S \subset \PP_\CC^2$ is smooth, the normal cone $\sSpec_S\big(\bigoplus_{t \ge 1}\scrI_S^t/\scrI_S^{t+1}\big)$ is a vector bundle, and we get the equality $\scrI_{\Res_C(tS)} = \scrI_S^{t-1}$. By construction, we have $\scrI_{(tS) \cap C, C} \cong \calO_C\left(-\sum_{i=1}^9 P_i\right)$. It follows that for each $t\ge 1$, there is an exact sequence
\begin{equation}
\label{eq_residual_ses}
\CD
0 @>>> \scrI_S^{t-1}(3t-3) @>>> \scrI_S^{t}(3t) @>>> \calO_C(tD_S) @>>> 0.
\endCD
\end{equation}

To simplify notation in what follows, consider the function
\[
\gamma(t, S) \colonequals \begin{cases}
1 & \text{ if }\ \scrL_S^{\otimes t} = \calO_C(tD_S) \cong \calO_C,\\
0 & \text{ otherwise},
\end{cases}
\]
that records whether $\scrL_S = \calO_C(D_S)$ is a $t$-torsion point in $\Pic^0(C)$. By the Riemann-Roch theorem, we obtain that
\begin{equation}
\label{eq_riemann_roch}
h^0\left(C,\, \calO_C(tD_S)\right)\ =\ h^1\left(C,\, \calO_C(tD_S)\right)\ =\ \gamma(t, S),
\end{equation}
see \cite[Lemma~IV.1.2 and Theorem~IV.1.3]{HARTSHORNE}.

The following proposition serves as the main technical tool:

\begin{proposition}
Let $S \in V$.
For each $t \ge 1$, we have
\[
h^0\left(\PP_\CC^2,\, \scrI_S^{t}(3t)\right)\ =\ 1 + \sum_{k=1}^t \gamma(k, S)
\qquad \text{ and } \qquad
h^1\left(\PP_\CC^2,\, \scrI_S^{t}(3t)\right)\ =\ \sum_{k=1}^t \gamma(k, S).
\]
\end{proposition}

\begin{proof}
Let $t \ge 1$. From \autoref{eq_residual_ses} and \autoref{eq_vanish_H2}, we get the long exact sequence in cohomology
\begin{equation}
\label{eq_les_cubic}
\CD
0 @>>> \HH^0\left(\PP_\CC^2,\, \scrI_S^{t-1}(3t-3)\right) @>>> \HH^0\left(\PP_\CC^2,\, \scrI_S^{t}(3t)\right) @>\rho_t>> \HH^0\left(C,\, \calO_C(tD_S)\right) @>>>\\
@>>> \HH^1\left(\PP_\CC^2,\, \scrI_S^{t-1}(3t-3)\right) @>>> \HH^1\left(\PP_\CC^2,\, \scrI_S^{t}(3t)\right) @>>> \HH^1\left(C,\, \calO_C(tD_S)\right) @>>> 0.
\endCD
\end{equation}
Using \autoref{eq_riemann_roch}, and proceeding inductively on $t \ge 1$, it suffices to show that the map $\rho_t$ above is surjective. Indeed, for the initial step, we have $h^0(\PP_{\CC}^2,\, \calO_{\PP_{\CC}^2}) = 1$ and $h^1(\PP_{\CC}^2,\, \calO_{\PP_{\CC}^2}) = 0$.

If $\calO_C(tD_S) \not\cong \calO_C$, then $h^0\left(C,\, \calO_C(tD_S)\right) = 0$ and it follows that $\rho_t$ is surjective. Therefore, we may assume that $\calO_C(tD_S) \cong \calO_C$. Fix the assumed isomorphism $\calO_C(tD_S) \cong \calO_C$, and let $\phi_t \in \HH^0\left(C,\, \calO_C(tD_S)\right)$ be the global section corresponding with $1 \in \HH^0\left(C,\, \calO_C\right)$.

Let $d$ be the precise order of torsion of $\scrL_S = \calO_C(D_S)$, i.e., $\scrL_S$ is $d$-torsion but it is not $k$-torsion for any $k \le d -1$. This implies that $t = d \cdot a$ for some integer $a \ge 1$. Since $\HH^1\left(C, \calO_C(kD_S)\right) = 0$ for all $k \le d-1$, see \autoref{eq_riemann_roch}, proceeding inductively with the exact sequence \autoref{eq_les_cubic}, we obtain the vanishing
\[
\HH^1\left(\PP_\CC^2,\, \scrI_S^{d-1}(3d-3)\right)\ =\ 0.
\]
But then \autoref{eq_les_cubic} yields that $\rho_d$ is surjective. Consequently, we may choose $F \in \HH^0\left(\PP_\CC^2,\, \scrI_S^{d}(3d)\right)$ such that $\rho_d(F) = \phi_d$. 
Let 
\[
\CD
\alpha\colon \HH^0\left(\PP_\CC^2,\, \scrI_S^{d}(3d)\right)^{\otimes a} @>>> \HH^0\left(\PP_\CC^2,\, \scrI_S^{t}(3t)\right)
\endCD
\]
and 
\[
\CD
\beta\colon \HH^0\left(C,\, \calO_C(dD_S)\right)^{\otimes a} @>>> \HH^0\left(C,\, \calO_C(tD_S)\right)
\endCD
\]
be the natural $a$-fold multiplication maps. We then have the commutative diagram
\[
\minCDarrowwidth45pt
\CD
\HH^0\left(\PP_\CC^2,\, \scrI_S^{d}(3d)\right)^{\otimes a} @>\rho_d^{\otimes a}>> \HH^0\left(C,\, \calO_C(dD_S)\right)^{\otimes a} @. \quad \cong\ \HH^0\left(C,\, \calO_C\right)^{\otimes a} \\
@V{\alpha}VV @VV{\beta}V @.\\
\HH^0\left(\PP_\CC^2,\, \scrI_S^{t}(3t)\right) @>\rho_t>> \HH^0\left(C,\, \calO_C(tD_S)\right) @. \cong\ \HH^0\left(C,\, \calO_C\right).
\endCD
\]
We conclude that $\phi_t = \rho_t\left(F^a\right)$, so $\rho_t$ is surjective, as required.
\end{proof}

As a consequence of the above proposition we obtain the following vanishing criterion:

\begin{corollary}
\label{prop_residual_technique}
Let $S \in V$ and fix $t \ge 1$. Then $\HH^1\big(\PP_\CC^2,\, \scrI_S^t(3t)\big) \neq 0$ if and only if $k \cdot \left[\scrL_S\right] = 0$ in $\Pic^0(C)$, for some integer $k$ with $1 \le k \le t$.
\end{corollary}

For each $k \ge 1$, consider the following locus
\[
Z_k \colonequals \big\lbrace S \in V \mid k \cdot \left[\scrL_S\right] = 0 \text{ in }\Pic^0(C) \big\rbrace.
\]
Combining \autoref{lem_vanish_t0} and \autoref{prop_residual_technique}, it follows that 
\begin{equation}
\label{eq_finite_union_Z}
\bigcup_{k = 1}^\infty Z_k\ =\ \bigcup_{k = 1}^{t_0} Z_k. 
\end{equation}
In terms of the map $\Phi\colon V \to \Pic^0(C)$, this means that if a configuration $S \in V$ is mapped to a torsion point, it must be a torsion point of order at most $t_0$.

\smallskip
\noindent
{\sc Step 4:} \emph{Exploit the group structure of $\Pic^0(C)$.} Since the map $\Phi\colon V \to \Pic^0(C)$ is dominant, and $\Pic^0(C) \cong C$ is a smooth curve, $\Pic^0(C) \setminus \Phi(V)$ must be a finite set. The torsion subgroup of $\Pic^0(C)$ is
\[
\Pic^0(C)_{\rm tor}\ \cong\ \lim\limits_{\substack{\to\\ n}} \left(\ZZ/n\ZZ\right)^2\ \cong\ \left(\QQ/\ZZ\right)^2,
\]
see, for example, \cite[Chapter~2]{MUMFORD_ABELIAN}. Therefore, for each $k \ge 1$, the set of torsion points
\[
Y_k \colonequals \big\lbrace w \in \Pic^0(C) \mid k \cdot w = 0 \text{ and } j \cdot w \neq 0 \text{ for all } 1 \le j \le k-1 \big\rbrace
\]
of order $k$ is nonempty. This implies that 
\[
\Phi(V) \,\cap\, \left( \bigcup_{k = t_0+1}^\infty Y_k \right)\ \neq\ \varnothing.
\]
This contradicts \autoref{eq_finite_union_Z}, completing the proof of \autoref{thm_9points}.
\end{proof}

With similar notation as before, for any $S = (P_1,\ldots, P_9) \subset \left(\PP_\CC^2\right)^9$, set $\scrI_S \subset \calO_{\PP_\CC^2}$ to be the ideal sheaf of the set $\lbrace P_1,\ldots,P_9\rbrace \subset \PP_\CC^2$, and $I_S \colonequals \bigoplus_{k\ge 0} \HH^0\left(\PP_\CC^2,\, \scrI_S(k)\right)$ be the corresponding saturated homogeneous ideal. For each $t \ge 1$, the $t$-th symbolic $I_S^{(t)}$ of $I_S$ coincides with the ideal $\bigoplus_{k\ge 0} \HH^0\left(\PP_\CC^2,\, \scrI_S^t(k)\right)$. Due to classic results of Nagata (see \cite{NAGATA_CONJ,HARBOURNE}), we know that, if $S$ is a general set of nine points in $\PP_\CC^2$, then 
\[
\rank_\CC\left(\left[I_S^{(t)}\right]_{3t}\right)\ =\ h^0\left(\PP_\CC^2,\, \scrI_S^t(3t)\right)\ \ge\ 1 \quad \text{ for all } \quad t \ge 1.
\]
However, \autoref{thm_9points} yields the following conclusion: one cannot expect constant behavior for all $t \ge 1$.

\begin{corollary}
\label{cor_symb_pow}
There is no dense open subset $U \subset \left(\PP_\CC^2\right)^9$ such that, for all $t \ge 1$, the function 
\[
S \in \left(\PP_\CC^2\right)^9\ \mapsto\ h^0\left(\PP_\CC^2,\, \scrI_S^t(3t)\right)
\] 
is constant on $U$.
\end{corollary}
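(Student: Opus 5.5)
We argue by contradiction, reusing the machinery of the proof of \autoref{thm_9points}. Suppose there is a dense open subset $U'\subset\left(\PP_\CC^2\right)^9$ on which $S\mapsto h^0\big(\PP_\CC^2,\,\scrI_S^t(3t)\big)$ is constant for every $t\ge 1$. Shrinking $U'$, we may assume it lies in the image $\pi(U)$ of the dense open set $U\subset\AA_\CC^{27}$ from Step~2. The Hochster--Roberts flatness condition, the intersection condition, distinctness of the nine points, and the existence of a unique smooth cubic through them are all generic conditions, so they do not depend on \autoref{hyp}. We therefore drop condition (f), which was the only place \autoref{hyp} entered.

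Next pick a smooth cubic $C$ through the nine points of some $S\in U'$, and set $V'\colonequals C^9\cap U'$. This is a nonempty open subset of $C^9$, hence dense, since $C^9$ is irreducible. By the Proposition of Step~3, for $S\in V'$,
\[
h^0\big(\PP_\CC^2,\,\scrI_S^t(3t)\big)\ =\ 1+\sum_{k=1}^t\gamma(k,S).
\]
If this is constant on $V'$ for every $t$, then by taking successive differences in $t$, the function $\gamma(t,\cdot)$ is constant on $V'$ for each $t$. So for each $t$, either every $S\in V'$ has $\Phi(S)$ of order dividing $t$, or no $S\in V'$ does.

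Now we use Step~4. The map $\Phi\colon V'\to\Pic^0(C)$ is dominant, since it is the restriction of the summation map to a dense open set. Its image is therefore cofinite in the curve $\Pic^0(C)$. Because $\Pic^0(C)_{\rm tor}\cong(\QQ/\ZZ)^2$ is infinite, the image contains some torsion point, say of order $d$, and it also contains a non-torsion point, since torsion points are countable. Taking $t=d$, the function $\gamma(d,\cdot)$ equals $1$ at the first point and $0$ at the second. This contradicts the constancy of $\gamma(d,\cdot)$ on $V'$.

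The only delicate point is checking that the setup of Step~3 applies on $V'$ without \autoref{hyp}. This amounts to verifying that the exact sequence \autoref{eq_residual_ses} and the Proposition need only that $S$ consists of nine distinct points on the smooth cubic $C$. That is clear from their proofs, which use only the residual sequence and Riemann--Roch on $C$. Alternatively, the corollary follows from \autoref{thm_9points}(a) by using flatness of the $h^0$ via cohomology and base change, but the direct argument above is cleaner.
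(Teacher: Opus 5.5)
Your proof is correct, but it takes a different route from the paper's.

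\textbf{The paper's route.} The paper deduces the corollary from \autoref{thm_9points}(a). Suppose $h^0\big(\PP_\CC^2,\,\scrI_S^t(3t)\big)$ were constant on a dense open set for every $t$. Since $h^2=0$ and the Euler characteristic is locally constant in the flat family $\scrI^t(3t)$, $h^1$ would also be constant there. Grauert's theorem would then make every $M_t$ locally free over a common principal open $D(a)$. That contradicts the failure of flatness of $\bigoplus_t M_t\otimes_A A_a$.

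\textbf{Your route.} You bypass the modules $M_t$, \autoref{hyp}, and all the flatness and base-change machinery: Hochster--Roberts, constancy of $\chi$, and Grauert. Instead you feed the constancy assumption directly into the Step~3 formula $h^0=1+\sum_{k\le t}\gamma(k,S)$. Taking differences in $t$, each $\gamma(t,\cdot)$ is constant on $V'$. The contradiction then comes from the cofinite set $\Phi(V')\subset\Pic^0(C)$ containing both a torsion point and a non-torsion point. Your supporting checks are sound:
\begin{itemize}
\item the Step~3 Proposition needs only nine distinct points on a smooth cubic, so it does not depend on \autoref{hyp};
\item $V'$ is a nonempty open subset of the irreducible $C^9$, hence dense;
\item $\Phi|_{V'}$ is dominant because the summation map $C^9\to\Pic^0(C)$ is surjective.
\end{itemize}

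\textbf{What each buys.} Your argument is more elementary and self-contained. It also identifies the jumping locus explicitly: configurations on $C$ whose class $\Phi(S)$ is torsion. The paper's route makes the corollary a two-line consequence of the main theorem, presenting it as the fiberwise shadow of the failure of generic flatness of $\HH^2_{(x_0,x_1,x_2)}\big(\Rees(I)\big)$, which is the conceptual point of that section. Your direct argument is essentially Steps~3--4 of the proof of \autoref{thm_9points}, run on $h^0$ instead of $h^1$ and without passing through a contradiction hypothesis.
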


\begin{proof}
The proof follows by combining \autoref{thm_9points}, the local constancy of the Euler characteristic of the fibers of a projective flat family \cite[\S 5]{MUMFORD_ABELIAN}, and Grauert's theorem \cite[Corollary III.12.9]{HARTSHORNE}.
\end{proof}

\section*{Acknowledgments}

We are grateful to Mircea Musta\c t\u a for comments on an earlier version of this manuscript.


\end{document}